\documentclass[10pt,reqno]{amsart}
\usepackage{amsmath,amssymb,mathtools}
\usepackage{microtype}
\usepackage{xcolor}
\usepackage{hyperref}
\hypersetup{colorlinks=true,linkcolor=black,citecolor=black,urlcolor=black}
\numberwithin{equation}{section}
\newtheorem{theorem}{Theorem}[section]
\newtheorem{proposition}[theorem]{Proposition}
\newtheorem{lemma}[theorem]{Lemma}

\newtheorem{definition}[theorem]{Definition}
\theoremstyle{remark}

\newcommand{\Zd}{\mathbb Z^d}
\newcommand{\R}{\mathbb R}

\newcommand{\D}{\Delta_{\rm d}}

\newcommand{\supp}{\operatorname{supp}}
\newcommand{\norm}[1]{\left\|#1\right\|}

\newcommand{\less}{\lesssim}
\title[Imaginary powers of discrete Laplacians]{Sharp estimates for imaginary powers of discrete Laplacians}
\author[T. A. Bui]{The Anh Bui}
\address{School of Mathematical and Physical Sciences, Macquarie University, NSW 2109, Australia}
\email{the.bui@mq.edu.au}

\author[T. Q. Bui]{The Quan Bui}
\address{University of Economics Ho Chi Minh City (UEH), Ho Chi Minh City, Vietnam}
\email{buithequan2002@yahoo.com}

\author[T. T. Cao]{Thanh Tinh Cao}
\address{Department of Mathematics and Physics, University of Information
	Technology, Ho Chi Minh City, Vietnam}	
\address{	Vietnam National
	University, Ho Chi Minh City, Vietnam}
\email{tinhct@uit.edu.vn}

\hypersetup{pdfauthor={The Anh Bui, The Quan Bui,  Thanh Tinh Cao}}
\subjclass[2020]{Primary 42B15, 42B20; Secondary 42B30, 39A12}
\keywords{Discrete Laplacian, imaginary powers, Hardy spaces, sharp bounds, spectral multipliers}
\begin{document}
\begin{abstract}
Let $\Delta_{\rm d}$ be the nonnegative discrete Laplacian on $\mathbb Z^d$.
We prove that, for $0<p\le1$,
\[
 \|\Delta_{\rm d}^{i\gamma}\|_{H^p_{\Delta_{\rm d}}\to H^p_{\Delta_{\rm d}}}
 \simeq (1+|\gamma|)^{d(1/p-1/2)},
\]
and, for $1<p<\infty$,
\[
 \|\Delta_{\rm d}^{i\gamma}\|_{\ell^p\to\ell^p}
 \simeq (1+|\gamma|)^{d|1/p-1/2|}.
\]
The estimates are sharp with respect to the exponent of $1+|\gamma|$.
 We also explain why the general spectral
multiplier theorem and the usual Calder\'on--Zygmund approach do not give
the sharp power.
\end{abstract}
\maketitle

\section{Introduction}
Let $d\ge1$ and let $e_1,\ldots,e_d$ be the standard basis of $\R^d$.
We consider the nonnegative discrete Laplacian on $\Zd$ defined by
\begin{equation}\label{eq:L}
 \Delta_{\rm d} f(n)=\sum_{j=1}^d\bigl(2f(n)-f(n+e_j)-f(n-e_j)\bigr),
 \qquad n\in\Zd.
\end{equation}
The operator $\Delta_{\rm d}$ is nonnegative and self-adjoint on $\ell^2(\Zd)$.
Its symbol under the discrete Fourier transform is
\begin{equation}\label{eq:lambda}
 \lambda(\theta)=4\sum_{j=1}^d\sin^2(\theta_j/2),
 \qquad \theta\in[-\pi,\pi]^d.
\end{equation}
For $\gamma\in\R$, the imaginary power $\Delta_{\rm d}^{i\gamma}$ is defined
by the spectral theorem. The aim of this paper is to find the optimal
growth of its norms in terms of $1+|\gamma|$.

The imaginary powers of the Laplacian on $\R^d$ have been studied
extensively. See, for example, Sikora and Wright \cite{SW}.
{For Hardy spaces associated with operators, upper estimates
for imaginary powers were obtained under Gaussian heat kernel assumptions
in \cite{BBHH}. More recently, sharp estimates were studied under
Davies--Gaffney estimates in \cite{TBBDL}.}
It is natural to consider the corresponding problem for the discrete
Laplacian. However, the usual methods cannot be applied directly.
{Indeed, the heat semigroup $e^{-t\Delta_{\rm d}}$ does not satisfy
 the standard Gaussian upper bound or the usual Davies--Gaffney estimate.
 For example, if $n$ and $m$ are nearest neighbours, then
 $p_t(n,m)=t+O(t^2)$ as $t\to0^+$, which is incompatible with the
 Gaussian off-diagonal decay $C\exp(-c|n-m|^2/t)$.
 Thus the assumptions of \cite{BBHH,TBBDL} are not satisfied by
 the discrete Laplacian.}
The wave propagator also does not have finite
propagation speed with respect to the lattice distance. In addition,
the symbol \eqref{eq:lambda} is periodic, and the Euclidean Fourier
multiplier argument does not immediately apply.

A general spectral multiplier theorem was obtained in
\cite[Theorem~2.5]{BD}. That theorem requires an order of
smoothness greater than $d/2$ and does not directly give the sharp
power of $1+|\gamma|$. Even a near-endpoint version of the general
spectral multiplier argument, with an arbitrarily small loss in the
smoothness order, would give only a bound of the form
\begin{equation}\label{eq:epsilon-bound}
 \|\Delta_{\rm d}^{i\gamma}\|_{\ell^p\to\ell^p}
 \le C_{p,d,\varepsilon}(1+|\gamma|)^{d|1/p-1/2|+\varepsilon}.
\end{equation}
This does not provide the sharp power. 
The purpose of the present paper is to remove the loss in
\eqref{eq:epsilon-bound} and obtain the sharp estimates.
{We prove not only the upper bounds but also matching lower
bounds. In particular, the powers of $1+|\gamma|$ in both the Hardy-space
and $\ell^p$ estimates are optimal.}

For $0<p\le1$, let $H^p_{\Delta_{\rm d}}(\Zd)$ be the Hardy space associated
with $\Delta_{\rm d}$, as introduced in \cite[Section~3]{BD}. In fact, the Hardy spaces $H^p_{\Delta_{\rm d}}(\Zd)$ coincide with the classical Hardy spaces $H^p(\Zd)$ defined on $\Zd$; see Section~2. 
We recall their definition and the relevant properties in Section~2.
Our main results are as follows.

\begin{theorem}\label{thm:main}
	Let $d\ge1$ and $0<p\le1$. There exists $C_{p,d}>0$ such that for all
	$\gamma\in\mathbb R$ and $f\in H^p_{\Delta_{\rm d}}(\mathbb Z^d)$,
	\begin{equation}\label{eq:Hp}
		\|\Delta_{\rm d}^{i\gamma}f\|_{H^p_{\Delta_{\rm d}}(\mathbb Z^d)}
		\le C_{p,d}(1+|\gamma|)^{d(1/p-1/2)}
		\|f\|_{H^p_{\Delta_{\rm d}}(\mathbb Z^d)}.
	\end{equation}
	Moreover,
	\begin{equation}\label{eq:Hp-sharp}
		\|\Delta_{\rm d}^{i\gamma}\|_{H^p_{\Delta_{\rm d}}(\mathbb Z^d)
			\to H^p_{\Delta_{\rm d}}(\mathbb Z^d)}
		\simeq (1+|\gamma|)^{d(1/p-1/2)},\qquad\gamma\in\mathbb R.
	\end{equation}
	The implicit constants depend only on $p$ and $d$.
\end{theorem}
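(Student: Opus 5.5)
For the upper bound \eqref{eq:Hp} we work with the atomic decomposition of $H^p_{\Delta_{\rm d}}(\Zd)$ from Section~2. Since $H^p_{\Delta_{\rm d}}=H^p(\Zd)$, we use classical $(p,2)$-atoms $a$ supported in a discrete cube $Q$ of side $\ell$ (centre $n_Q$), with $\|a\|_{\ell^2}\le |Q|^{1/2-1/p}$ and moments vanishing up to order $N>d(1/p-1)$. It then suffices to prove
\[
\|\Delta_{\rm d}^{i\gamma}a\|_{H^p}\le C(1+|\gamma|)^{d(1/p-1/2)}
\]
uniformly in atoms. Equivalently, since $\Delta_{\rm d}^{i\gamma}a$ has vanishing integral, it suffices to show that it is a molecule with this constant.

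Set $R=(1+|\gamma|)\ell$. On the enlarged cube $Q^*=RQ$ we use $\ell^2$-boundedness, with $\|\Delta_{\rm d}^{i\gamma}\|_{2\to2}=1$, together with H\"older's inequality:
\[
\|\Delta_{\rm d}^{i\gamma}a\|_{\ell^p(Q^*)}\le |Q^*|^{1/p-1/2}\|a\|_2\le (1+|\gamma|)^{d(1/p-1/2)}.
\]
Away from $Q^*$ we need pointwise decay of the kernel. For this we write $m(\lambda)=\lambda^{i\gamma}$ and split dyadically in the frequency variable, $m=\sum_{j\le 0} m\,\phi(2^{-j}\lambda)$; the symbol $\lambda(\theta)\le 4d$ is bounded, so no high frequencies appear. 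Each piece $K_j(n)=\int_{[-\pi,\pi]^d}\lambda(\theta)^{i\gamma}\phi(2^{-j}\lambda(\theta))e^{in\theta}\,d\theta$ is estimated by integrating by parts in $\theta$. Each derivative of $\lambda^{i\gamma}\phi(2^{-j}\lambda)$ costs a factor of $(1+|\gamma|)2^{-j/2}$ on the support, where $|\theta|\sim 2^{j/2}$. Away from $\theta=0$ the symbol is smooth and periodic, so the integration by parts on the torus has no boundary terms; this is where the periodic structure replaces Euclidean arguments. The result is
\[
|\partial^\alpha K_j(n)|\lesssim 2^{j(d+|\alpha|)/2}\bigl(1+2^{j/2}|n|/(1+|\gamma|)\bigr)^{-M},
\]
i.e.\ the kernel essentially lives at scale $(1+|\gamma|)2^{-j/2}$.

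Next we use the vanishing moments of $a$ (through a Taylor expansion of $K_j(n-\cdot)$ around $n_Q$, with discrete differences) for the pieces where $2^{-j/2}\gtrsim\ell$, and the size estimate for the remaining pieces. Summing over $j$ gives, for $|n-n_Q|\ge R$,
\[
|\Delta_{\rm d}^{i\gamma}a(n)|\lesssim |Q|^{1-1/p}\,\ell^{N'}R^{-N'}\,\bigl((1+|\gamma|)/|n-n_Q|\bigr)^{d}\,\ldots,
\]
and we would take this in the form $\lesssim |Q|^{-1/p}(R/|n-n_Q|)^{d+\epsilon'}(1+|\gamma|)^{d/p-d/2}$ with $\epsilon'$ large enough that its $\ell^p$ sum over $|n-n_Q|\ge R$ converges to $\lesssim(1+|\gamma|)^{d(1/p-1/2)}$. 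Molecules with such decay belong to $H^p$, by the molecular characterization recalled in Section~2 (or by checking the square function of the heat semigroup directly).

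\textbf{Main obstacle.} The delicate point is to arrange the bookkeeping so that there is no $\varepsilon$ loss. The exponent $d(1/p-1/2)$ comes only from the $\ell^2$ part on $Q^*$, so the tail must contribute at most $O(1)$ relative to it. This requires the sharp localization scale $R=(1+|\gamma|)\ell$ and enough moment cancellation, with $N$ large, to absorb the $(1+|\gamma|)^{N}$ factors from differentiating $\lambda^{i\gamma}$. Small cubes, $\ell=1$, on which cancellation is trivial, must be handled by the size estimate alone.

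For the lower bound in \eqref{eq:Hp-sharp} we test on a fixed atom $a$, e.g.\ $a=\delta_0-\delta_{e_1}$ or a higher-order difference. The plan is to show by stationary phase that $\Delta_{\rm d}^{i\gamma}a$ has $\ell^2$ mass spread over a ball of radius $\sim|\gamma|$, with $|\Delta_{\rm d}^{i\gamma}a(n)|\gtrsim|\gamma|^{-d/2}$ on a positive proportion of $|n|\sim|\gamma|$. The phase $\gamma\log\lambda(\theta)+n\cdot\theta$ has a nondegenerate critical point at $|\theta|\sim|\gamma|/|n|$, and near $\theta=0$ we have $\lambda\approx|\theta|^2$, as for the Euclidean Laplacian. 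Then $\|\Delta_{\rm d}^{i\gamma}a\|_{H^p}\ge\|\Delta_{\rm d}^{i\gamma}a\|_{\ell^p}\gtrsim(|\gamma|^{d})^{1/p}|\gamma|^{-d/2}$. For $|\gamma|\le1$ the bound is trivial since the operator is nonzero and the norms are comparable to constants.
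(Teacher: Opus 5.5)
\paragraph{There is a genuine gap in your upper bound, in the tail estimate.} The loss is a full power $\rho^{d/2}$, with $\rho=1+|\gamma|$, not an $\varepsilon$.

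Your integration-by-parts bound $|\partial^\alpha K_j(n)|\lesssim 2^{j(d+|\alpha|)/2}(1+2^{j/2}|n|/\rho)^{-M}$ is a non-stationary-phase bound. On the region $|n|\lesssim\rho2^{-j/2}$, where $K_j$ actually lives, it reduces to the trivial bound $2^{jd/2}$. In that region the phase $n\cdot\theta+\gamma\log\lambda(\theta)$ is stationary, and (at least for small frequencies) the true size is $2^{jd/2}\rho^{-d/2}$. This is the $|\gamma|^{-d/2}$ of Lemma~\ref{lem:stationary}.

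If you carry out your own summation (Taylor expansion for $2^{-j/2}\gtrsim\ell$, size bound otherwise), you get
\[
 |\Delta_{\rm d}^{i\gamma}a(n)|\lesssim |Q|^{-1/p}\Bigl(\frac{R}{|n-n_Q|}\Bigr)^{d+N+1},\qquad |n-n_Q|\ge R=\rho\ell .
\]
Its $\ell^p$ norm over $\{|n-n_Q|\ge R\}$ is of order $|Q|^{-1/p}R^{d/p}=\rho^{d/p}$, not $\rho^{d(1/p-1/2)}$.

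The form you say you ``would take'' does not follow from your kernel bounds. It would not suffice either: with the factor $\rho^{d/p-d/2}$, its $\ell^p$ tail is of order $\rho^{2d/p-d/2}$. What you need is $|Q|^{-1/p}\rho^{-d/2}(R/|n-n_Q|)^{d+\epsilon'}$. Enlarging to $R=\rho\ell$ and using moments absorbs the factors $\rho^N$ from derivatives of $\lambda^{i\gamma}$. Nothing in your argument produces the oscillatory gain $\rho^{-d/2}$.

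\paragraph{How the paper obtains this gain.} This is the paper's key idea, and it works in $L^2$. Proposition~\ref{prop:kernel} gives, via Plancherel,
$\sum_n|K_j(n)|^2(1+2^{j/2}|n|)^{2N}\lesssim 2^{jd/2}\rho^{2N}$.
H\"older/Cauchy--Schwarz on the annuli $|n-n_Q|\sim2^k\rho\ell$ then costs only the square root of their volume, $(2^k\rho\ell)^{d/2}$. That is exactly the saving of $\rho^{d/2}$ over pointwise bounds.

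The paper then estimates $S_\psi$ directly on the $\Delta_{\rm d}^Mb$-molecules of Section~\ref{sec:prelim}, after splitting $I=(I-e^{-r_B^2\Delta_{\rm d}})^M+P(r_B^2\Delta_{\rm d})$. So it never needs pointwise decay or vanishing moments of $\Delta_{\rm d}^{i\gamma}a$.

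To stay pointwise, you would need uniform stationary-phase bounds $|\nabla^kK_j(n)|\lesssim2^{j(d+k)/2}\rho^{-d/2}$. You can hope for these only when $|n-n_Q|\ge C\rho\ell$ with $C$ large, where only small frequencies are stationary and $\log\lambda$ is a perturbation of $\log|\theta|^2$. For $|n|\sim\rho$ and $d\ge2$ they fail in general, because $D^2\log\lambda$ degenerates; for example, at $\theta=(\pi/2,\pi/2)$ when $d=2$ it has rank one. Such points must go into the $\ell^2$ part.

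Two further points on this route:
\begin{itemize}
\item The molecular characterization in Section~\ref{sec:prelim} is not of pointwise-decay type. You would need classical molecular theory on $H^p(\Zd)$.
\item You would need vanishing moments of $\Delta_{\rm d}^{i\gamma}a$ up to order $[d(1/p-1)]$, not only of its integral.
\end{itemize}

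\paragraph{Lower bound.} Testing $\Delta_{\rm d}^{i\gamma}$ on a fixed $H^p$ function and using $\|g\|_{\ell^p}\lesssim\|g\|_{H^p_{\Delta_{\rm d}}}$ is a legitimate and more direct alternative to the paper's route. The paper instead proves an $\ell^q$ lower bound for $1<q<2$ and then uses complex interpolation with $\ell^2$.

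However, your stationary-phase step is not justified as written. For $|n|\sim|\gamma|$ the critical points sit at $|\theta|\sim1$, where $\lambda\approx|\theta|^2$ is irrelevant, and they may be several or degenerate. You also must control the singularity of $\lambda^{i\gamma}$ at $\theta=0$. Two fixes work:
\begin{itemize}
\item Restrict to $|n|\sim C|\gamma|$ with $C$ large.
\item Localize in frequency: $P_\chi\delta_0$, with $\chi$ as in Lemma~\ref{lem:stationary} (supported away from $0$), lies in every $H^p_{\Delta_{\rm d}}$, and the lemma then gives the bound immediately.
\end{itemize}
For bounded $\gamma$, ``nonzero'' does not give a uniform constant. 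Use $\|\Delta_{\rm d}^{i\gamma}\|\,\|\Delta_{\rm d}^{-i\gamma}\|\ge1$ together with the upper bound.
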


We also obtain the sharp estimates on $\ell^p(\mathbb Z^d)$.
\begin{theorem}\label{thm:lp}
	Let $d\ge1$ and $1<p<\infty$. Then
	\begin{equation}\label{eq:Lp}
		\|\Delta_{\rm d}^{i\gamma}\|_{\ell^p(\mathbb Z^d)\to\ell^p(\mathbb Z^d)}
		\simeq (1+|\gamma|)^{d|1/p-1/2|},\qquad\gamma\in\mathbb R.
	\end{equation}
	The implicit constants depend only on $p$ and $d$.
\end{theorem}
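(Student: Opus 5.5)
The statement to prove is Theorem~\ref{thm:lp}. I will assume Theorem~\ref{thm:main} is available, in particular \eqref{eq:Hp} with $p=1$ and the identification of $H^1_{\Delta_{\rm d}}$ with the classical $H^1(\Zd)$. The upper bound then follows by complex interpolation, and the lower bound comes from testing on explicit sequences.

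\textbf{Upper bound.} Consider the analytic family $T_z=\Delta_{\rm d}^{i\gamma z}$ on the strip $0\le\operatorname{Re}z\le1$. A cleaner route is to interpolate the single operator $\Delta_{\rm d}^{i\gamma}$ between two endpoints.
\begin{itemize}
\item On $\ell^2$ the operator is unitary on the range of $\Delta_{\rm d}$; the kernel is trivial since $\lambda>0$ a.e. Hence $\|\Delta_{\rm d}^{i\gamma}\|_{\ell^2\to\ell^2}\le1$.
\item By \eqref{eq:Hp} with $p=1$, the $H^1_{\Delta_{\rm d}}\to H^1_{\Delta_{\rm d}}$ norm is at most $C(1+|\gamma|)^{d/2}$.
\end{itemize}
The Fefferman--Stein type complex interpolation $[H^1,\ell^2]_\theta=\ell^p$ for $1<p<2$, with $1/p=1-\theta/2$, holds on $\Zd$; it can be obtained via the classical $H^1(\Zd)$ theory, or by using the $H^1\to\ell^1$ bound together with Marcinkiewicz interpolation via a sharp-maximal or atomic argument. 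It gives
\[
\|\Delta_{\rm d}^{i\gamma}\|_{\ell^p\to\ell^p}\less(1+|\gamma|)^{\frac d2(1-\theta)}=(1+|\gamma|)^{d(1/p-1/2)}.
\]
Here $(1-\theta)/2=1/p-1/2$. If complex interpolation with $H^1$ is delicate to cite, I would instead apply Stein interpolation to the analytic family $z\mapsto\Delta_{\rm d}^{i\gamma}$ composed with the $H^1\to\ell^1$ embedding. For $2<p<\infty$ the bound follows by duality, since $(\Delta_{\rm d}^{i\gamma})^*=\Delta_{\rm d}^{-i\gamma}$ and the estimate is symmetric in $\gamma$; this gives the exponent $d(1/2-1/p)$. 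For $|\gamma|\le1$ the bound is uniform.

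\textbf{Lower bound.} By duality it suffices to treat $1<p<2$. I test on
\[
f=\Delta_{\rm d}^{-i\gamma}g,
\]
where $g$ is a concentrated function whose image spreads out. Concretely, take $\phi_\gamma$ with Fourier transform $\hat\phi(\theta)=\eta(\theta)\lambda(\theta)^{-i\gamma}$ for a bump $\eta$ supported near some fixed $\theta_0\ne0$ inside $(-\pi,\pi)^d$, where $\nabla\lambda$ and the Hessian of $\lambda$ are nondegenerate. Then $\Delta_{\rm d}^{i\gamma}\phi_\gamma=\check\eta$ is a fixed Schwartz-like sequence with $\ell^p$ norm $\simeq1$.

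The phase of $\phi_\gamma$ is $\gamma\log\lambda(\theta)$. By stationary phase, $\phi_\gamma(n)$ has size $\simeq|\gamma|^{-d/2}$ on a set of $\simeq|\gamma|^d$ lattice points $n=-\gamma\nabla\log\lambda(\theta)$, for $\theta$ in the support of $\eta$. The Hessian of $\log\lambda$ is nondegenerate near a generic $\theta_0$, for instance $\theta_0=(\pi/2,\dots,\pi/2)$, where one checks that the Hessian is invertible. Away from this set, $\phi_\gamma$ decays rapidly by nonstationary phase. Hence
\[
\|\phi_\gamma\|_{\ell^p}\simeq|\gamma|^{-d/2}|\gamma|^{d/p}=|\gamma|^{d(1/p-1/2)}.
\]
Applying $\Delta_{\rm d}^{-i\gamma}$, which has the same norm as $\Delta_{\rm d}^{i\gamma}$ on $\ell^p$, to $\check\eta$ yields
\[
\|\Delta_{\rm d}^{-i\gamma}\|_{\ell^p\to\ell^p}\gtrsim|\gamma|^{d(1/p-1/2)},
\]
and the same holds for $\Delta_{\rm d}^{i\gamma}$ after conjugation.

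The main obstacle is the stationary-phase lower bound. One needs a uniform lower bound $|\phi_\gamma(n)|\gtrsim|\gamma|^{-d/2}$ on a set of cardinality $\gtrsim|\gamma|^d$. This requires controlling the remainder in the stationary phase expansion uniformly over $n$ in the image of the gradient map, and checking the nondegeneracy of $\nabla^2\log\lambda$ on the support of $\eta$. Alternatively, one can avoid pointwise asymptotics: the Plancherel identity gives $\|\phi_\gamma\|_{\ell^2}\simeq1$, and nonstationary phase shows that $\phi_\gamma$ is essentially supported in a set of size $\less|\gamma|^d$. Hölder's inequality, in the form $\|\phi\|_2^2\le\|\phi\|_p\|\phi\|_{p'}$ together with $\|\phi\|_{p'}\le|E|^{1/p'-1/2}\|\phi\|_2$ plus a tail term, then gives the bound. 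This alternative only needs an upper bound on the support, which is easier, so it is the route I would try first.
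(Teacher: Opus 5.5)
Your upper bound is the paper's argument: the paper interpolates the $H^{p_0}_{\D}$ bound for some $p_0\in(0,1]$ with $\ell^2$ via Proposition~\ref{prop:interpolation} and then uses duality, and you take $p_0=1$. Your lower-bound test function $\phi_\gamma=\D^{-i\gamma}\check\eta$ is exactly the paper's $K_{\gamma,\chi}=\D^{i\gamma}P_\chi\delta_0$ with $\gamma$ replaced by $-\gamma$. So the overall strategy matches, but two steps in the lower bound fail as written.

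\textbf{The proposed point is degenerate.} Your example $\theta_0=(\pi/2,\ldots,\pi/2)$ does not work when $d\ge2$. There $\lambda=2d$, $\nabla\lambda=(2,\ldots,2)$ and $D^2\lambda=\operatorname{diag}(2\cos\theta_j)=0$, so
\[
D^2\log\lambda(\theta_0)=\frac{D^2\lambda}{\lambda}-\frac{\nabla\lambda\,\nabla\lambda^{T}}{\lambda^2}=-\frac{1}{d^2}\mathbf 1\mathbf 1^{T},
\]
which has rank one. Also, the condition you state at the outset, nondegeneracy of the Hessian of $\lambda$, is the wrong one: the phase is $\gamma\log\lambda$, and $D^2\lambda$ vanishes at your point anyway. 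Your remark that a generic point works is true, but you need an explicit one. The paper takes $\theta_0=(\pi/2,0,\ldots,0)$, where $D^2\log\lambda=\operatorname{diag}(-1,1,\ldots,1)$.

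\textbf{The alternative you would try first cannot work.} For $p'>2$ and $\phi$ supported in $E$, H\"older gives $\|\phi\|_{p'}\ge|E|^{1/p'-1/2}\|\phi\|_2$, not $\le$. More fundamentally, an upper bound on the support can never force $\|\phi\|_p$ to be large for $p<2$; think of $\delta_0$.

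What you need instead is the dispersive bound $\|\phi_\gamma\|_{\ell^\infty}\lesssim|\gamma|^{-d/2}$, uniform in $n$. It follows from the standard uniform stationary-phase upper estimate once $D^2\log\lambda$ is nondegenerate on $\supp\eta$. Plancherel gives $\|\phi_\gamma\|_{\ell^2}=(2\pi)^{-d/2}\|\eta\|_{L^2}$, independent of $\gamma$. Then, for $1<p<2$,
\[
\|\phi_\gamma\|_2^2\le\|\phi_\gamma\|_\infty^{2-p}\,\|\phi_\gamma\|_p^p,
\qquad\text{hence}\qquad
\|\phi_\gamma\|_p\gtrsim|\gamma|^{d(1/p-1/2)}.
\]
This corrected route is actually simpler than the paper's Lemma~\ref{lem:stationary}. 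It needs only an upper bound from stationary phase: no uniform lower bound on the leading amplitude, and no count of lattice points with $n/\gamma\in U$. The paper's route gives more, namely pointwise size $|\gamma|^{-d/2}$ on a set of $\gtrsim|\gamma|^d$ points, but that extra information is not needed.

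\textbf{Bounded $\gamma$.} The statement asserts $\simeq$ for all $\gamma\in\R$, so you also need $\|\D^{i\gamma}\|_{\ell^p\to\ell^p}\gtrsim1$ when $|\gamma|$ is bounded, where stationary phase gives nothing. The paper gets this from $T_\gamma T_{-\gamma}=I$ together with $\|T_\gamma\|_{p\to p}=\|T_{-\gamma}\|_{p\to p}$.
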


We emphasize that the estimates in Theorems~\ref{thm:main} and
\ref{thm:lp} are sharp. The main difficulty is to obtain the optimal
power in the upper bounds. Our argument is based on localized weighted
$\ell^2$ kernel estimates for spectral multipliers of $\Delta_{\rm d}$ and
on the molecular characterization of $H^p_{\Delta_{\rm d}}$. The derivatives
of the localized imaginary-power multiplier introduce powers of
$1+|\gamma|$. By enlarging the ball by this factor, we can absorb these
powers in the off-diagonal estimates. The remaining factor comes from
the volume of the enlarged ball and gives the exponent in
Theorem~\ref{thm:main}. Interpolation and duality yield the upper
estimates in Theorem~\ref{thm:lp}.

The lower estimates for $1<p<\infty$ are obtained by stationary phase
on a fixed region of the torus. The sharpness in Theorem~\ref{thm:main}
then follows from the $\ell^p$ lower estimates and the complex
interpolation of Hardy spaces. We also discuss at the end of the paper
why the classical Calder\'on--Zygmund approach does not give the sharp
power of $1+|\gamma|$.

The paper is organized as follows. Section~2 contains the kernel estimate
and some facts about Hardy spaces. In Section~3 we prove the main results.
The final subsection explains the limitation of the classical
Calder\'on--Zygmund method.

\section{Preliminaries}\label{sec:prelim}
We write $B(n,r)=\{m\in\Zd:|m-n|<r\}$, and denote by $|B|$ the counting
measure of a ball $B$. We have $|B(n,r)|\simeq(1+r)^d$. For a ball
$B=B(n_B,r_B)$, put $S_0(B)=B$ and
$S_j(B)=2^jB\setminus2^{j-1}B$, $j\ge1$.

\subsection{A localized kernel estimate}
We begin with an estimate for scalar spectral multipliers of $\Delta_{\rm d}$.
It is an immediate consequence of the joint multiplier estimate in
\cite[Proposition~2.1]{BD}. We include the details to make the notation
and the dependence on the scale clear.

\begin{proposition}\label{prop:kernel}
Let $\psi\in C_c^\infty((1/4,4))$. Let $N$ be a nonnegative integer and $0<R\le4\sqrt d$. Suppose that
\[
 F_R(\lambda)=\psi(R^{-2}\lambda)G_R(\lambda),
\]
where $G_R$ is smooth on the support of the cutoff. Then
\begin{equation}\label{eq:W2}
 \sum_{n\in\Zd}|K_{F_R(\D)}(n,m)|^2(1+R|n-m|)^{2N}
 \le C_{N,d}R^d\|F_R(R^2\cdot)\|_{W_N^\infty(\R)}^2
\end{equation}
for all $m\in \Zd$, where $\|F\|_{W_N^\infty(\R)}=\sum_{j=0}^N\|F^{(j)}\|_\infty$.
The constant is independent of $R$ and $m$.
\end{proposition}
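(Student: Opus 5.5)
Since the operator is translation invariant, I would prove \eqref{eq:W2} on the Fourier side. The kernel is a convolution kernel: $K_{F_R(\D)}(n,m)=k_R(n-m)$ with
\[
 k_R(x)=(2\pi)^{-d}\int_{[-\pi,\pi]^d}F_R(\lambda(\theta))e^{ix\cdot\theta}\,d\theta .
\]
So the estimate does not depend on $m$, and it is enough to bound $\sum_x|k_R(x)|^2(1+R|x|)^{2N}$. Using $(1+R|x|)^{2N}\lesssim_{N,d}\sum_{|\alpha|\le N}R^{2|\alpha|}|x^\alpha|^2$, I reduce to showing, for each multi-index $|\alpha|\le N$,
\[
 R^{2|\alpha|}\sum_x|x^\alpha k_R(x)|^2 \lesssim R^d\|F_R(R^2\cdot)\|^2_{W^\infty_N}.
\]
By Plancherel on the torus, $x^\alpha k_R(x)$ is (up to a constant) the Fourier coefficient of $\partial_\theta^\alpha[F_R(\lambda(\theta))]$. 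Here I use that $F_R\circ\lambda$ is smooth and $2\pi$-periodic, so integrating by parts produces no boundary terms. The problem therefore becomes an $L^2(\mathbb T^d)$ bound for $\partial^\alpha_\theta F_R(\lambda(\theta))$.

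Write $H(s)=F_R(R^2 s)$. Then $F_R(\lambda(\theta))=H(R^{-2}\lambda(\theta))$, and $H$ is supported in $(1/4,4)$. By the Faà di Bruno formula, $\partial^\alpha_\theta H(R^{-2}\lambda)$ is a sum of terms
\[
 H^{(k)}(R^{-2}\lambda)\,R^{-2k}\prod_{i=1}^k\partial^{\beta_i}\lambda ,
\]
where $\beta_1+\dots+\beta_k=\alpha$ and $k\le|\alpha|$. On the support we have $\lambda\simeq R^2$. Since $\lambda(\theta)\simeq|\theta|^2$ on $[-\pi,\pi]^d$, this gives $|\theta|\simeq R$. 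The derivatives of $\lambda$ satisfy $|\partial^\beta\lambda|\lesssim|\theta|^{2-|\beta|}$ when $|\beta|\le2$ (first derivatives are $2\sin\theta_j$, second derivatives are bounded), and $|\partial^\beta\lambda|\lesssim1$ in general. Each term is then bounded by $\|H^{(k)}\|_\infty R^{-2k}\prod_i R^{2-|\beta_i|}=\|H^{(k)}\|_\infty R^{-|\alpha|}$, at least when every $|\beta_i|\le2$. When some $|\beta_i|\ge3$, I use $|\partial^{\beta_i}\lambda|\lesssim1\lesssim R^{2-|\beta_i|}$, which holds because $R\lesssim1$ (this is where $R\le4\sqrt d$ enters). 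Integrating the square over the region $\{|\theta|\simeq R\}$, which has measure $\simeq R^d$, gives
\[
 \|\partial^\alpha_\theta F_R(\lambda)\|_{L^2}^2\lesssim R^{d-2|\alpha|}\|H\|_{W^\infty_N}^2 .
\]
Multiplying by $R^{2|\alpha|}$ yields the claim.

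The main obstacle is the bookkeeping with $R$-dependence in the chain rule. It must be uniform for small $R$ and also for $R$ near its maximum $4\sqrt d$. When $R$ is near the maximum, the support region can touch $\lambda$ close to $4d$, the region of the corners of the torus where $\lambda$ is not comparable to $|\theta|^2$ in a naive way. But there $R\simeq1$, so every factor is $O(1)$ and the measure is $O(1)\simeq R^d$, and the estimate is trivial. I would therefore split into the cases $R\le c_d$, where $\lambda\simeq|\theta|^2$ near the origin and the support is a small annulus, and $c_d<R\le4\sqrt d$, the crude case.
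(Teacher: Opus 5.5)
Your proof is correct, but it takes a different route from the paper's. For $R\le\sqrt d$ the paper passes to the joint functional calculus, writing $F_R(\D)=\widetilde F_R(\vec\Delta_{\rm d})$ with $\widetilde F_R(\xi)=F_R(\xi_1+\cdots+\xi_d)$. It then quotes the weighted $L^2$ estimate of \cite[Proposition~2.1]{BD} at scale $2R$. Only in the range $\sqrt d<R\le4\sqrt d$, where $R\simeq1$, does it do the direct Plancherel--chain rule computation.

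You instead carry out that direct computation uniformly in $R$. Plancherel on $\mathbb T^d$ turns $x^\alpha k_R(x)$ into the Fourier coefficients of $\partial_\theta^\alpha[H(R^{-2}\lambda)]$. Fa\`a di Bruno, together with $\lambda\simeq|\theta|^2$, $|\partial_j\lambda|\lesssim|\theta|$ and $|\partial^\beta\lambda|\lesssim1$, then gives the pointwise bound $R^{-|\alpha|}\|H\|_{W^\infty_N}$ on a set of measure $\lesssim R^d$.

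Your version is self-contained: it effectively reproves the scalar case of the cited proposition. It also shows that the restriction $R\le4\sqrt d$ enters only through $1\lesssim R^{2-|\beta|}$ for derivatives of $\lambda$ of order at least $3$. The paper's version is shorter and rests on an estimate that also covers genuinely joint, non-radial multipliers of $\vec\Delta_{\rm d}$.

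Your concern about the corners of the torus is unfounded. The bounds $\frac{4}{\pi^2}|\theta|^2\le\lambda(\theta)\le|\theta|^2$ and $|\partial_j\lambda(\theta)|=2|\sin\theta_j|\le2|\theta_j|$ hold on all of $[-\pi,\pi]^d$. So your main computation already covers every $0<R\le4\sqrt d$, and the final case split is harmless but unnecessary.
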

\begin{proof}
Let $\Delta_{{\rm d},j}$ be the $j$th coordinate discrete Laplacian and write
$\vec\Delta_{\rm d}=(\Delta_{{\rm d},1},\ldots,\Delta_{{\rm d},d})$. Set
$\widetilde F_R(\xi)=F_R(\xi_1+\cdots+\xi_d)$ for $\xi\in[0,\infty)^d$.
Then $F_R(\Delta_{\rm d})=\widetilde F_R(\vec\Delta_{\rm d})$.

Suppose first that $R\le\sqrt d$. Since $\supp F_R\subset[R^2/4,4R^2]$,
we have $\supp\widetilde F_R\subset[0,(2R)^2]^d$.
We may apply \cite[Proposition~2.1]{BD} with parameter $2R$. Since
$1+R|n-m|\simeq1+2R|n-m|$,
\[
\begin{aligned}
\sum_{n\in\Zd}|K_{F_R(\D)}(n,m)|^2(1+R|n-m|)^{2N} 
& \lesssim R^d
 \|\widetilde F_R((2R)^2\cdot)\|_{W_N^\infty(\R^d)}^2\\
& \lesssim R^d\|F_R(R^2\cdot)\|_{W_N^\infty(\R)}^2.
\end{aligned}
\]
Here we used the chain rule and the fact that
$\widetilde F_R((2R)^2\xi)=F_R(4R^2\sum_{j=1}^d\xi_j)$.

If $\sqrt d<R\le4\sqrt d$, then $R$ is bounded above and below by
positive constants depending on $d$. The convolution kernel of
$F_R(\D)$ is the inverse Fourier transform of
$F_R(\lambda(\theta))$, where $\lambda$ is given by
\eqref{eq:lambda}. Plancherel's theorem, followed by the chain rule,
gives
\[
 \sum_{n\in\Zd}|K_{F_R(\D)}(n,m)|^2(1+|n-m|)^{2N}
 \lesssim_{N,d}\sum_{j=0}^N\|F_R^{(j)}\|_\infty^2.
\]
Since $R\simeq_d1$, the right-hand side is bounded by the right-hand
side of \eqref{eq:W2}. 

This completes the proof.
\end{proof}

\subsection{Hardy spaces associated with $\Delta_{\rm d}$}
We recall some definitions and properties of the Hardy spaces
associated with $\Delta_{\rm d}$; see \cite[Section~3]{BD}.
Choose a real-valued function $\psi\in C_c^\infty((1/4,4))$
such that
\begin{equation}\label{eq-psi function}
 \sum_{\ell\in\mathbb Z}\psi(2^{-2\ell}\lambda)=1,
 \qquad \lambda>0.
\end{equation}
For $f\in\ell^2(\Zd)$, we define the discrete square function
\begin{equation}\label{eq:square}
 S_\psi f(n)=\left(\sum_{\ell\in\mathbb Z}
 |\psi(2^{-2\ell}\Delta_{\rm d})f(n)|^2\right)^{1/2},
 \qquad n\in\Zd.
\end{equation}
For $0<p<\infty$, the Hardy space $H^p_{\Delta_{\rm d}}(\Zd)$ is defined
to be the completion of the set
\[
 \{f\in\ell^2(\Zd): S_\psi f\in\ell^p(\Zd)\}
\]
under the quasi-norm
\[
 \|f\|_{H^p_{\Delta_{\rm d}}(\Zd)}=\|S_\psi f\|_{\ell^p(\Zd)}.
\]
This definition is equivalent to the area square-function definition
in \cite[Section~3]{BD}, by \cite[Theorem~3.4]{BD}. It is also
independent of the choice of an admissible cutoff, up to equivalent
quasi-norms. Moreover, by \cite[Theorem~3.5]{BD},
\[
 H^p_{\Delta_{\rm d}}(\Zd)=\ell^p(\Zd),\qquad 1<p<\infty,
\]
with equivalent norms.

We next recall the molecular characterization of these spaces from
\cite[Definitions~3.1--3.2 and Theorem~3.3]{BD}.
\begin{definition}\label{def:molecule}
Let $\varepsilon>0$, $0<p\le1$ and $M\in\mathbb N$.
A function $a$ is called a $(p,M,\varepsilon)_{\Delta_{\rm d}}$-molecule
associated with a ball $B\subset\Zd$ if there exists a function $b$
such that
\begin{enumerate}
 \item[(i)] $a=\Delta_{\rm d}^M b$;
 \item[(ii)] for every $k=0,1,\ldots,M$ and $j=0,1,2,\ldots$,
 \begin{equation}\label{eq:mol}
  \|\Delta_{\rm d}^k b\|_{\ell^2(S_j(B))}
  \le 2^{-j\varepsilon}r_B^{2(M-k)}|2^jB|^{1/2-1/p}.
 \end{equation}
\end{enumerate}
\end{definition}

\begin{definition}\label{def:mol-representation}
Let $\varepsilon>0$, $0<p\le1$ and $M\in\mathbb N$.
We say that
\[
 f=\sum_{j=0}^\infty\lambda_j a_j
\]
is a molecular $(p,M,\varepsilon)_{\Delta_{\rm d}}$-representation
if $\{\lambda_j\}_{j\ge0}\in\ell^p$, each $a_j$ is a
$(p,M,\varepsilon)_{\Delta_{\rm d}}$-molecule, and the series converges
in $\ell^2(\Zd)$. We define
$H^p_{\Delta_{\rm d},\mathrm{mol},M,\varepsilon}(\Zd)$ to be the
completion of the set
\[
 \bigl\{f\in\ell^2(\Zd):\ f\text{ has a molecular }
 (p,M,\varepsilon)_{\Delta_{\rm d}}\text{-representation}\bigr\}
\]
under the quasi-norm given by
\begin{equation}\label{eq:mol-norm}
 \|f\|_{H^p_{\Delta_{\rm d},\mathrm{mol},M,\varepsilon}(\Zd)}^p
 =\inf_{f=\sum_j\lambda_j a_j}\sum_j|\lambda_j|^p,
\end{equation}
where the infimum is taken over all molecular
$(p,M,\varepsilon)_{\Delta_{\rm d}}$-representations of $f$.
\end{definition}

By \cite[Theorem~3.3]{BD}, if $\varepsilon>0$ and
$M>\frac d2(\frac1p-\frac12)$, then
\[
 H^p_{\Delta_{\rm d}}(\Zd)
 =H^p_{\Delta_{\rm d},\mathrm{mol},M,\varepsilon}(\Zd)
\]
with equivalent quasi-norms. In particular, for a function in the
$\ell^2$ core, we have
\[
 \|f\|_{H^p_{\Delta_{\rm d}}(\Zd)}^p
 \simeq \inf\sum_j|\lambda_j|^p,
\]
where the infimum is taken over all molecular
$(p,M,\varepsilon)_{\Delta_{\rm d}}$-representations of $f$.

The following lemma shows that, in the definition of \(H^p_{\mathcal D}(\mathbb Z^d)\) via molecules, we may restrict to balls of radius \(r>1\) without changing the space or its quasi-norm.

\begin{lemma}\label{lem:small-balls}
	In the molecular characterization of $H^p_{\D}(\Zd)$ one may restrict
	attention to balls of radius greater than $1$, without changing the space or
	its quasi-norm, up to a constant depending on $p,d,M,\varepsilon$.
\end{lemma}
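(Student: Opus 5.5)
The plan is to show that every $(p,M,\varepsilon)_{\D}$-molecule associated with a ball $B$ of radius $r_B\le1$ is, up to a constant depending only on $p,d,M,\varepsilon$, a $(p,M,\varepsilon)_{\D}$-molecule associated with a ball of radius greater than $1$. Granting this, any molecular representation $f=\sum_j\lambda_ja_j$ becomes $f=\sum_j(C\lambda_j)(a_j/C)$ with all balls of radius $>1$. Hence the quasi-norm computed with large balls is at most $C$ times the usual one. The reverse inequality is trivial, since large-ball molecules are molecules. The completions therefore coincide with equivalent quasi-norms.

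The natural replacement ball is $\widetilde B=B(n_B,2)$. Since $\Zd$ is discrete, a ball with $r_B\le1$ essentially reduces to the single point $n_B$; in any case $|B|\simeq|\widetilde B|\simeq1$. Two changes have to be handled.

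\emph{(a) The annuli.} For $j\ge1$ we have $S_j(B)\subset 2^{j}B\subset 2^{j}\widetilde B$. Each annulus $S_i(\widetilde B)$ is covered by the at most three annuli $S_{i'}(B)$ with $2^{i'}r_B\in(2^{i-1}\cdot 2,\,2^{i}\cdot2]$. For $r_B\le1$, the relevant $i'$ satisfy $2^{i'}\simeq 2^{i}/r_B$, so $i'\ge i$. Summing \eqref{eq:mol} over these $i'$ and using $|2^{i'}B|\gtrsim|2^i\widetilde B|$ together with $1/2-1/p\le0$ gives a bound of the form $2^{-i\varepsilon}|2^i\widetilde B|^{1/2-1/p}$ times a constant.

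\emph{(b) The factor $r_B^{2(M-k)}$.} Here I would not keep $b$. Instead I would set $\widetilde b=2^{-2M}\D^{M}b\cdot$ adjusted, that is, rechoose $b$ so that the estimates hold with $r_{\widetilde B}=2$. Simply keeping $b$ is already enough, because $r_B^{2(M-k)}\le 1\le 2^{2(M-k)}$ for $r_B\le1$. So the same $b$ works, with the constant absorbed. A small point: if $r_B$ is tiny, say less than $1$ so that $B=\{n_B\}$, the condition with factor $r_B^{2(M-k)}$ is even stronger, so there is no issue. This makes (b) trivial, and the only real work is (a).

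The main obstacle is the bookkeeping in (a), namely matching dyadic annuli of the two balls, since $B$ may be degenerate (for $r_B\le 1/2$ it equals $\{n_B\}$ and $2^jB$ for small $j$ is still $\{n_B\}$, so $S_j(B)$ may be empty). I would handle this by estimating $\|\D^kb\|_{\ell^2(S_i(\widetilde B))}\le\sum_{i'}\|\D^kb\|_{\ell^2(S_{i'}(B)\cap S_i(\widetilde B))}$ over the $i'$ with $S_{i'}(B)\cap S_i(\widetilde B)\neq\emptyset$. This forces $2^{i'}r_B\ge 2^{i-1}\cdot 2\cdot c$, hence $2^{-i'\varepsilon}\le 2^{-i\varepsilon}$ because $r_B\le1$. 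The number of such $i'$ is $O(1+\log_2(1/r_B))$ in principle, which would be harmful. However, for $2^{i'}r_B$ ranging within a bounded factor only $O(1)$ indices occur, so the count is uniform. The remaining factor $|2^{i'}B|^{1/2-1/p}\lesssim|2^i\widetilde B|^{1/2-1/p}$ follows from $2^{i'}r_B\gtrsim 2^i$ and $|B(n,r)|\simeq(1+r)^d$.
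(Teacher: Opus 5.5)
Your proposal is correct and is essentially the paper's argument: you replace $B$ by $B(n_B,2)$, keep the same $b$ because $r_B^{2(M-k)}\le 1\le 2^{2(M-k)}$, and compare annuli using $2^{i'}r_B\simeq 2^i$, a uniformly bounded number of indices $i'$, $2^{-i'\varepsilon}\le 2^{-i\varepsilon}$ and $|2^{i'}B|\gtrsim|2^i\widetilde B|$. Two small bookkeeping points need fixing. First, for $i\ge1$ the correct matching range is $2^i<2^{i'}r_B<2^{i+2}$, not $(2^i,2^{i+1}]$. Second, your bounded-count argument does not cover the central piece $S_0(\widetilde B)=\widetilde B$. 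That piece is handled, as in the paper, by summing over all $i'$ and using $\sum_{i'}2^{-i'\varepsilon}\lesssim 1$, $r_B^{2(M-k)}\le 1$ and $|2^{i'}B|^{1/2-1/p}\le 1$.
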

\begin{proof}
	Let $a=\D^M b$  be a molecule associated with
	$B=B(n_0,r)$ where $0<r\le1$. Set $B'=B(n_0,2)$.
	Fix $0\le k\le M$ and write $q=1/2-1/p<0$. Then we have
	\[
	\|\D^k b\|_{\ell^2(S_j(B))}
	\le 2^{-j\varepsilon}r^{2(M-k)}|2^jB|^q.
	\]
	Since $|2^jB|\ge1$ and $r^{2(M-k)}\le1$, summing the pieces 
	which meet $B'$ yields
	$\|\D^k b\|_{\ell^2(B')}\lesssim |B'|^q 2^{2(M-k)}$.
	
	For $j\ge1$, we have
	\[
	\begin{aligned}
		\|\D^k b\|_{\ell^2(S_j(B'))}&\le \sum_{\ell: S_\ell(B)\cap S_j(B')\ne \emptyset }\|\D^k b\|_{\ell^2(S_\ell(B))} \\
		&\le \sum_{\ell: S_\ell(B)\cap S_j(B')\ne \emptyset } 2^{-\ell \varepsilon} r^{2(M-k)}|2^\ell B|^q.
	\end{aligned}
	\]
	Note that if $S_\ell(B)\cap S_j(B')\ne \emptyset$, then $2^\ell r\simeq 2^j$ and hence $|2^\ell B|\simeq |2^{j}B'|$; moreover, $\#\{\ell: S_\ell(B)\cap S_j(B')\ne \emptyset\}\lesssim 1 $ and the implicit constant is independent of $j$ and $r$. Consequently,
	\[
	\|\D^k b\|_{\ell^2(S_j(B'))}\lesssim 2^{-j\varepsilon}2^{2(M-k)}|2^jB'|^q,
	\]
	where in the last inequality we used the fact $r\le1$.
	
	Thus $a$ is, up to a constant independent of $r$, also a
	$(p,M,\varepsilon)$-molecule associated with $B'$.
	
	This completes our proof.
\end{proof}

We also recall the following complex interpolation result from
\cite[Proposition~3.6]{BD}.
\begin{proposition}\label{prop:interpolation}
Let $0<p_0,p_1<\infty$ and $0<\theta<1$. If
\[
 \frac1p=\frac{1-\theta}{p_0}+\frac\theta{p_1},
\]
then
\[
 [H^{p_0}_{\Delta_{\rm d}}(\Zd),H^{p_1}_{\Delta_{\rm d}}(\Zd)]_\theta
 =H^p_{\Delta_{\rm d}}(\Zd)
\]
with equivalent quasi-norms. Here $[\cdot,\cdot]_\theta$
denotes complex interpolation.
\end{proposition}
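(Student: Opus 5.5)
The plan is to realize $H^p_{\D}(\Zd)$ as a retract of a discrete tent space, uniformly in $0<p<\infty$, and then transfer the known complex interpolation of tent spaces. Let $T^p$ denote the discrete tent space of sequences $G=(G_\ell(n))_{\ell\in\mathbb Z,n\in\Zd}$ with quasi-norm
\[
\|G\|_{T^p}=\Bigl\|\Bigl(\sum_{\ell}\sum_{m:|m-n|<\max(1,2^{-\ell})}\frac{|G_\ell(m)|^2}{|B(n,\max(1,2^{-\ell}))|}\Bigr)^{1/2}\Bigr\|_{\ell^p_n}.
\]
Here $2^{-\ell}$ plays the role of the scale $t$, with $\lambda\sim 2^{2\ell}$.

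\textbf{Step 1: the coretraction.} Define $\mathcal Q f=(\psi(2^{-2\ell}\D)f(n))_{\ell,n}$. By the area square-function definition of $H^p_{\D}$ and its equivalence with $S_\psi$ (\cite[Theorem~3.4]{BD}), $\|\mathcal Q f\|_{T^p}\simeq\|f\|_{H^p_{\D}}$. Thus $\mathcal Q$ is bounded $H^p_{\D}\to T^p$ for every $p$ simultaneously.

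\textbf{Step 2: the retraction.} Choose $\varphi\in C_c^\infty((1/4,4))$ with $\sum_\ell\varphi(2^{-2\ell}\lambda)\psi(2^{-2\ell}\lambda)=1$ for $\lambda>0$. Set
\[
\pi G=\sum_\ell \varphi(2^{-2\ell}\D)\,(\mathbf 1_{2^\ell\lesssim 1}G_\ell).
\]
Only scales with $2^{2\ell}\lesssim 4d$ matter, since the spectrum of $\D$ lies in $[0,4d]$. Then $\pi\mathcal Q=\mathrm{id}$ on the $\ell^2$ core. The key point is that $\pi:T^p\to H^p_{\D}$ is bounded.

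For $0<p\le1$, use the atomic decomposition of $T^p$ into tent atoms $A$ supported in the tent over a ball $B$ of radius $r_B>1$; small balls are reduced to $r_B>1$ exactly as in Lemma~\ref{lem:small-balls}. I then show that $\pi A$ is, up to a constant, a $(p,M,\varepsilon)_{\D}$-molecule associated with $B$. To do so, write $\pi A=\D^M b$ with
\[
b=\sum_\ell 2^{-2\ell M}\widetilde\varphi(2^{-2\ell}\D)A_\ell,\qquad \widetilde\varphi(\lambda)=\lambda^{-M}\varphi(\lambda).
\]
The estimates \eqref{eq:mol} on $S_j(B)$ then follow from the weighted $\ell^2$ kernel bound of Proposition~\ref{prop:kernel} with $R=2^\ell$, taking $N$ large. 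The factor $(1+2^\ell|n-m|)^{-N}$ gives decay $2^{-j\varepsilon}$ off $B$, and the size condition of the tent atom combined with Cauchy--Schwarz in $\ell$ gives the normalization $r_B^{2(M-k)}|2^jB|^{1/2-1/p}$. The molecular characterization (\cite[Theorem~3.3]{BD}) then gives $\|\pi G\|_{H^p_{\D}}\lesssim\|G\|_{T^p}$. For $1<p<\infty$, the same bound follows by duality of tent spaces ($(T^p)'=T^{p'}$) together with the $H^{p'}$ boundedness of $\mathcal Q$ and the self-adjointness of $\varphi(2^{-2\ell}\D)$.

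\textbf{Step 3: transfer.} Tent spaces interpolate by the complex method, $[T^{p_0},T^{p_1}]_\theta=T^p$. In the quasi-Banach range this is the Coifman--Meyer--Stein/Kalton--Mitrea argument. Its discrete version holds because $T^p$ embeds as a complemented, analytically convex lattice-type subspace, via the atomic decomposition. Since $H^p_{\D}$ is a retract of $T^p$ with $\pi,\mathcal Q$ independent of $p$, the abstract retract theorem yields $[H^{p_0}_{\D},H^{p_1}_{\D}]_\theta=H^p_{\D}$.

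\textbf{Main obstacle.} I expect the hardest part to be the quasi-Banach complex interpolation of the discrete tent spaces themselves, since one must verify the analytic convexity needed for the Kalton--Mitrea framework. The fallback is to identify $T^p$ with a retract of the mixed-norm lattice $\ell^p(\ell^2)$ via averaging operators. The molecular estimate in Step 2 is routine given Proposition~\ref{prop:kernel}.
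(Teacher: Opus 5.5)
The paper does not prove this statement; it quotes it from \cite[Proposition~3.6]{BD}. Your overall architecture is the standard one for results of this kind, and Steps~1--2 hold up. That architecture is: $H^p_{\D}$ realized as a retract of a discrete tent space through maps $\mathcal Q,\pi$ that do not depend on $p$, with $\pi$ controlled by sending tent atoms to molecules via Proposition~\ref{prop:kernel}, and tent-space duality for $p>1$. One small point on Step~1: if your dyadic $\psi$-area functional is not literally the one in \cite[Theorem~3.4]{BD}, you only need $\|\mathcal Q f\|_{T^p}\lesssim\|f\|_{H^p_{\D}}$. For $p\le1$ this is a routine molecule computation with Proposition~\ref{prop:kernel}, and the reverse inequality follows from $f=\pi\mathcal Q f$.

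\textbf{The gap is Step~3.} This step carries all of the quasi-Banach content, and $p_0\le1$ is exactly the case the paper needs. The reason you give for $[T^{p_0},T^{p_1}]_\theta=T^p$ does not work. An atomic decomposition is a nonlinear procedure, so it cannot supply a bounded linear projection exhibiting $T^p$ as a complemented subspace of anything. Your fallback also fails for $p<1$. Put $r_\ell=\max(1,2^{-\ell})$. The embedding $JG(n)=\bigl(G_\ell(m)\mathbf 1_{|m-n|<r_\ell}|B(n,r_\ell)|^{-1/2}\bigr)_{\ell,m}$ is an isometry $T^p\to\ell^p(\ell^2)$, but the natural averaging projection onto its range is unbounded. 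It maps the unit vector placed at $n=0$ in the coordinate $(\ell,0)$ to an element whose $\ell^2$-norm at $n$ is $\simeq|B(0,r_\ell)|^{-1}\mathbf 1_{B(0,r_\ell)}(n)$. The $p$-th power of its quasi-norm is then $\simeq|B(0,r_\ell)|^{1-p}$, which tends to $\infty$ as $\ell\to-\infty$.

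\textbf{What does work.} $T^p$ is itself a quasi-Banach function lattice on $\mathbb Z\times\Zd$, since its quasi-norm depends monotonically on $|G|$.
\begin{enumerate}
\item It is lattice $r$-convex with $r=\min(p,2)$: apply Minkowski in $\ell^{2/r}$ on each cone, then in $\ell^{p/r}_n$. Hence it is A-convex, and it is separable for $p<\infty$.
\item The Kalton--Mitrea theorem for separable A-convex quasi-Banach lattices therefore identifies $[T^{p_0},T^{p_1}]_\theta$ with the Calder\'on product $(T^{p_0})^{1-\theta}(T^{p_1})^{\theta}$.
\item H\"older's inequality puts that product inside $T^p$.
\item The reverse inclusion is a genuine factorization theorem. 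Write $T^{s}_{q}$ for the tent space with $\ell^q$ in place of $\ell^2$ on the cones. A Cohn--Verbitsky factorization $T^p_2=T^{p_0/(1-\theta)}_{2/(1-\theta)}\cdot T^{p_1/\theta}_{2/\theta}$ gives $|G|=F_0F_1$. Then $G_0=F_0^{1/(1-\theta)}\in T^{p_0}_2$ and $G_1=F_1^{1/\theta}\in T^{p_1}_2$ satisfy $|G|=G_0^{1-\theta}G_1^{\theta}$ and $\|G_0\|^{1-\theta}\|G_1\|^{\theta}\lesssim\|G\|_{T^p_2}$.
\end{enumerate}
You must either prove this factorization for your discrete dyadic tent space or import it from the literature (Bernal; Cohn--Verbitsky; Amenta for doubling metric measure spaces).

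With that in place, the transfer to $H^p_{\D}$ goes exactly as you say, because $\mathcal Q$ and $\pi$ are also bounded at the intermediate exponent $p$. You still need to realize $H^{p_0}_{\D}$ and $H^{p_1}_{\D}$ in a common ambient space, for instance as sequences on $\Zd$, with $\mathcal Q$ and $\pi$ defined consistently on the sum. This is routine given the rapid decay of the kernels.
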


Finally, we recall the definition of the classical discrete Hardy
spaces. Let $0<p\le1$ and $1<q\le\infty$. A function $a$ on $\Zd$
is called a classical $(p,q)$-atom if there exists a ball $B\subset\Zd$
such that
\begin{enumerate}
 \item[(i)] $\operatorname{supp}a\subset B$;
 \item[(ii)] $\|a\|_{\ell^q(\Zd)}\le |B|^{1/q-1/p}$;
 \item[(iii)] $\displaystyle\sum_{n\in\Zd}n^\alpha a(n)=0$
 for every multi-index $\alpha\in\mathbb N_0^d$ satisfying
 $|\alpha|\le d(1/p-1)$.
\end{enumerate}
Here $n^\alpha=n_1^{\alpha_1}\cdots n_d^{\alpha_d}$, and
$1/q=0$ when $q=\infty$. The classical Hardy space
$H^p(\Zd)$ consists of distributions admitting a representation
\[
 f=\sum_{j=0}^\infty\lambda_j a_j,
 \qquad \sum_{j=0}^\infty|\lambda_j|^p<\infty,
\]
where each $a_j$ is a classical $(p,q)$-atom and the series converges
in $\mathcal S'(\Zd)$ (and in $\ell^1(\Zd)$ when $p=1$).
The quasi-norm is defined by
\[
 \|f\|_{H^p(\Zd)}^p
 =\inf\left\{\sum_{j=0}^\infty|\lambda_j|^p:
 f=\sum_{j=0}^\infty\lambda_j a_j
 \text{ is an atomic representation}\right\}.
\]
The space is independent of $q$ within the above range, up to
equivalent quasi-norms.

In \cite[Section~3.2, Theorem~3.14]{BDA}, it was shown that
$H^p_{\Delta_{\rm d}}(\mathbb Z)=H^p(\mathbb Z)$ for $0<p\le1$.
The same argument extends to $d\ge1$, using balls in $\Zd$
and multi-index polynomial moments in place of intervals and
one-variable moments. Thus
\[
 H^p_{\Delta_{\rm d}}(\Zd)=H^p(\Zd),\qquad 0<p\le1,
\]
with equivalent quasi-norms. 

\section{Proofs of main results}\label{sec:proofs}
\subsection{A localized Hardy-space estimate}\label{subsec:localized}
In this section we prove the estimate needed for the main theorem. We take $\rho=1+|\gamma|\ge1$.

Let $M\in\mathbb N$, $r\ge1$, and set
\begin{equation}\label{eq:h-def}
 h_{0,r}(\lambda)=(1-e^{-r^2\lambda})^M,\qquad
 h_{1,r}(\lambda)=(r^2\lambda)^M\bigl[1-(1-e^{-r^2\lambda})^M\bigr].
\end{equation}
Let $P(u)=1-(1-e^{-u})^M$. For every integer $j\ge0$, it is easy to see that for all $u>0$,
\begin{equation}\label{eq:cutoff-derivative}
  |(u\partial_u)^j(1-e^{-u})^M|
   +|(u\partial_u)^j(u^M P(u))|\le C_{M,j}\min\{1,u^M\}.
\end{equation}

In addition, let $\psi$ be the function in \eqref{eq-psi function}.
For every $R>0$ and $u>0$, we have
\begin{equation}\label{eq:derivative}
 \max_{0\le j\le N} 
 \left|\frac{d^j}{du^j}\big[\psi(u)u^{i\gamma}h_{\nu,r}(R^2u)\big]\right|
 \le C_{M,N}\rho^N\min\{1,(Rr)^{2M}\},\qquad \nu=0,1
\end{equation}
Indeed, $u\simeq1$ on $\supp\psi$, and each derivative of
$u^{i\gamma}$ costs at most one factor of $\rho$; the derivatives
of the other factors are controlled by \eqref{eq:cutoff-derivative}.
For the application of Proposition~\ref{prop:kernel} below, the scalar
multiplier is
$G_{\ell,r}(\lambda)=\psi(R^{-2}\lambda)\lambda^{i\gamma}
h_{\nu,r}(\lambda)$. Equivalently, one may write
$F_{\ell,r}(\xi)=G_{\ell,r}(\sum_i\xi_i)$ for the joint
functional calculus; these two multipliers define the same operator.

\begin{lemma}\label{lem:localized}
Let $0<p\le1$, choose integers $N,M$ such that
\begin{equation}\label{eq:NM}
 N>d(1/p-1/2),\qquad 2M>N-d/2.
\end{equation}
Let $B=B(n_B,r_B)$ with $r_B\ge1$ and let $f$ be supported in $B$, with
$\norm{f}_2\le A|B|^{1/2-1/p}$. If $1\le r\le r_B$, then for $\nu=0,1$,
\begin{equation}\label{eq:local-lemma}
 \norm{S_\psi(\D^{i\gamma}h_{\nu,r}(\D)f)}_{\ell^p}
 \le C_{p,d,M,N}\,A\rho^{d(1/p-1/2)},
\end{equation}
where $S_\psi$ is the square function defined by \eqref{eq:square}.

\end{lemma}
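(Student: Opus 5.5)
We split $\Zd$ into the enlarged ball $B^*=B(n_B,\rho r_B)$ and its complement, writing $T=\D^{i\gamma}h_{\nu,r}(\D)$. On $B^*$ we use Hölder's inequality and the $\ell^2$-boundedness of $S_\psi$. Since $|h_{\nu,r}|\lesssim1$, we get
\[
\|S_\psi Tf\|_{\ell^p(B^*)}\le |B^*|^{1/p-1/2}\|S_\psi Tf\|_2\lesssim (\rho r_B)^{d(1/p-1/2)}\|f\|_2\lesssim A\rho^{d(1/p-1/2)},
\]
using $|B^*|\simeq(\rho r_B)^d$ and $\|f\|_2\le A|B|^{1/2-1/p}$. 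This already produces the claimed power, so the task is to show that the off-diagonal part costs nothing extra in $\rho$.

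Off $B^*$, we decompose into dyadic annuli $S_j(B^*)$, $j\ge1$, and on each annulus apply Hölder: $\|S_\psi Tf\|_{\ell^p(S_j(B^*))}\le |2^jB^*|^{1/p-1/2}\|S_\psi Tf\|_{\ell^2(S_j(B^*))}$. Next we expand $S_\psi Tf$ into its pieces $\psi(2^{-2\ell}\D)Tf$. For $\ell$ with $2^{\ell}\le 4\sqrt d$ (larger $\ell$ vanish since $\lambda\le4d$), the kernel of $\psi(2^{-2\ell}\D)\D^{i\gamma}h_{\nu,r}(\D)$ is that of $F_R(\D)$ with $R=2^\ell$. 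Since $\psi(2^{-2\ell}\cdot)\psi$ is supported near $R^2$, we insert a slightly enlarged cutoff to match Proposition~\ref{prop:kernel}. By \eqref{eq:derivative},
\[
\|F_R(R^2\cdot)\|_{W^\infty_N}\lesssim \rho^N\min\{1,(Rr)^{2M}\}.
\]
For $m\in B$ and $n\in S_j(B^*)$ we have $|n-m|\gtrsim 2^j\rho r_B$. Combining Proposition~\ref{prop:kernel} with Minkowski's inequality over $m\in B$ (costing $|B|^{1/2}\|f\|_2$) gives
\[
\|\psi(R^{-2}\D)Tf\|_{\ell^2(S_j(B^*))}\lesssim R^{d/2}\rho^N\min\{1,(Rr)^{2M}\}(1+R2^j\rho r_B)^{-N}|B|^{1/2}\|f\|_2 .
\]

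The key point is that the factor $\rho^N$ from differentiating $u^{i\gamma}$ is cancelled by the factor $\rho^{-N}$ coming from $(R2^j\rho r_B)^{-N}$. This cancellation is only available when $R r_B\gtrsim1$, and that is exactly why the ball was enlarged by $\rho$.

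For small $R$ (i.e. $Rr_B<1$) the weight does not help, and this is the main obstacle. Here we instead use the factor $(Rr)^{2M}$ from $h_{\nu,r}$. We keep the decay $(1+R2^j\rho r_B)^{-N}\le (R2^j\rho r_B)^{-N'}$ with an intermediate exponent, or simply bound it by $1$ when $R2^j\rho r_B\le1$. The worst case is $\rho^N$ with only $(Rr_B)^{2M}R^{d/2}$ available. If this loses powers of $\rho$, I would treat the range $R\le(\rho r_B)^{-1}$ separately and estimate it globally on $\ell^2$, using $\|S\|\lesssim$ the $L^\infty$ size $(Rr)^{2M}$ and the trivial Hölder bound over a ball of radius $R^{-1}$. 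This yields $(R r_B)^{2M}(Rr_B)^{-d(1/p-1/2)}$, which is summable because $2M>N-d/2\ge d(1/p-1/2)-d/2$ suitably. For the range $(\rho r_B)^{-1}<R$, the derivative bound on $u^{i\gamma}$ is balanced by the decay as above.

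Summing in $\ell$ (geometric thanks to $2M$ and to $N>d(1/p-1/2)$) and then in $j$ with $|2^jB^*|^{1/p-1/2}2^{-jN}$, which converges since $N>d(1/p-1/2)$, gives $A\rho^{d(1/p-1/2)}$. Throughout, $r\le r_B$ is used to bound $(Rr)^{2M}\le(Rr_B)^{2M}$. The expected difficulty is the bookkeeping of the low-frequency range, where the kernel is essentially spread at scale $R^{-1}\gg\rho r_B$ and the $\rho^N$ loss must be avoided by not taking derivatives there.
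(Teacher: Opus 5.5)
Your architecture matches the paper's: enlarge $B$ by $\rho$, use H\"older and the $\ell^2$-boundedness of $S_\psi$ near $B$, and apply Proposition~\ref{prop:kernel} with \eqref{eq:derivative} on annuli. The gap is in the low-frequency range $Rr_B<1$, and it rests on a misdiagnosis.

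The cancellation of $\rho^N$ is not restricted to $Rr_B\gtrsim1$. The bound $(1+R2^j\rho r_B)^{-N}\le(R2^j\rho r_B)^{-N}$ holds for every $R$. After H\"older on the annulus and $r\le r_B$, it leaves
\[
A\rho^{d(1/p-1/2)}\,2^{-j[N-d(1/p-1/2)]}\,(Rr_B)^{-(N-d/2)}\min\{1,(Rr_B)^{2M}\}.
\]
For $Rr_B<1$ the growth $(Rr_B)^{-(N-d/2)}$ is compensated by $(Rr_B)^{2M}$ precisely because $2M>N-d/2$; that is what this hypothesis in \eqref{eq:NM} is for. So no splitting in $R$ is needed. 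The option you mention in passing, keeping the decay with exponent $N$, is exactly the paper's proof. You discarded it by bounding $(1+x)^{-N}$ by $1$ in the ``worst case''.

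The replacement you propose for $R\le(\rho r_B)^{-1}$ does not work as written, for two reasons.
\begin{enumerate}
\item[(i)] $\psi(R^{-2}\D)Tf$ is not supported in a ball of radius $R^{-1}$, so H\"older there does not control its $\ell^p$ norm. The tail must be estimated with the kernel bound, which brings back $\rho^N$, and you never account for it.
\item[(ii)] Your exponent $2M-d(1/p-1/2)$ need not be positive under \eqref{eq:NM}. For $d=4$, $p=1$, $N=3$, $M=1$ it vanishes, and the sum over the infinitely many dyadic $R\le(\rho r_B)^{-1}$ diverges. (In Theorem~\ref{thm:main} $M$ is taken larger, but the lemma assumes only \eqref{eq:NM}.)
\end{enumerate}
A repair exists. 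Use $\|\psi(R^{-2}\D)Tf\|_2\lesssim R^{d/2}(Rr)^{2M}\|f\|_1$ to gain $(Rr_B)^{d/2}$. Then treat the tail by annuli at scale $R^{-1}$; there the leftover $\rho^N$ is paid for by $\sum_{R\le(\rho r_B)^{-1}}(Rr_B)^{2M-d(1/p-1)}\lesssim\rho^{-(2M-d(1/p-1))}$, which again requires $2M\ge N-d/2$. This is strictly more work than the uniform argument.

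A minor point: with $B^*$ itself as the local region, the claim $|n-m|\gtrsim2^j\rho r_B$ fails on $S_1(B^*)$ when $\rho$ is close to $1$. The paper takes $2B_\rho$ as the local region for this reason.
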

\begin{proof}
Recall  $\rho=1+|\gamma|$. Set $B_\rho=B(n_B,\rho r_B)$ and $E_j=S_j(2B_\rho)$ for $j\ge1$.
Because $r_B\ge1$, we have
\begin{equation}\label{eq:ball-volumes}
 |B|\simeq r_B^d,\quad |2B_\rho|\simeq (\rho r_B)^d,\quad
 |E_j|\simeq (2^j\rho r_B)^d.
\end{equation}

We now write 
\[
\begin{aligned}
\norm{S_\psi(\D^{i\gamma}h_{\nu,r}(\D)f)}^p_{\ell^p}
&\le \norm{S_\psi(\D^{i\gamma}h_{\nu,r}(\D)f)}^p_{\ell^p(2B_\rho)}\\
&\quad+\norm{S_\psi(\D^{i\gamma}h_{\nu,r}(\D)f)}^p_{\ell^p(\Zd\setminus2B_\rho)}\\
&=: \textbf{H}_1+\textbf{H}_2.
\end{aligned}
\]

We first estimate the term $\textbf{H}_1 $. By the H\"older inequality,
\[
\textbf{H}_1\le |2B_\rho|^{1-p/2}
\|S_\psi(\D^{i\gamma}h_{\nu,r}(\D)f)\|_2^p
\]
Using the $L^2$-boundedness of $S_\psi$ and the spectral theorem, we further obtain 

\begin{align}\label{eq:local-part}
\textbf{H}_1
 &\less |2B_\rho|^{1-p/2}
 \|\D^{i\gamma}h_{\nu,r}(\D)f \|_2^p\notag\\
  &\less |2B_\rho|^{1-p/2}
 \|f \|_2^p\notag\\
 &\lesssim A^p(\rho r_B)^{d(1-p/2)}|B|^{p/2-1}
 \eqsim A^p\rho^{d(1-p/2)}.
\end{align}
\bigskip

It remains to estimate the term $\textbf{H}_2$. To do this,  we write 
\[
\begin{aligned}
	\textbf{H}_2&\le \sum_{j\ge 1}\norm{S_\psi(\D^{i\gamma}h_{\nu,r}(\D)f)}^p_{\ell^p(E_j)}\\
&\le \sum_{j\ge 1}\sum_{\ell\in\mathbb Z}\norm{\psi(2^{-2\ell}\D)(\D^{i\gamma}h_{\nu,r}(\D)f)}^p_{\ell^p(E_j)}\\
&=: \sum_{j\ge 1}\sum_{\ell\in\mathbb Z}\textbf{H}_{2,j,\ell}.
\end{aligned}
\]

Set $R=2^\ell$, $\ell\in\mathbb Z$ and
\[
T_{\ell,r}=\psi(R^{-2}\D)\D^{i\gamma}h_{\nu,r}(\D).
\]
The multiplier of $T_{\ell,r}$ is supported in $[R^2/4,4R^2]$.
Since the spectrum of $\D$ is contained in $[0,4d]$, we only need
$R\le 4\sqrt d$. Applying Proposition~\ref{prop:kernel} to
\[
F_{\ell,r}(\xi)=\psi\left(R^{-2}\sum_{i=1}^d\xi_i\right)
\left(\sum_{i=1}^d\xi_i\right)^{i\gamma}
h_{\nu,r}\left(\sum_{i=1}^d\xi_i\right),
\]
and using \eqref{eq:derivative}, we obtain
\begin{equation}\label{eq:kernel-loc}
	\sum_{n\in\Zd}|K_{T_{\ell,r}}(n,m)|^2(1+R|n-m|)^{2N}
	\lesssim R^d\rho^{2N}\min\{1,(Rr)^{4M}\},
\end{equation}
where the implicit constant is independent of   $\ell,r,\gamma$ and $m$.

For $n\in E_j$ and $m\in B$, $j\ge1$, one has
$|n-m|\gtrsim 2^j\rho r_B$. Write
$$
T_{\ell,r}f(n)=\sum_{m\in B}K_{T_{\ell,r}}(n,m)f(m).
$$
Using Minkowski's inequality in $\ell^2(E_j)$ and
\eqref{eq:kernel-loc}, we have
\begin{align*}
 \| T_{\ell,r}f\|_{\ell^2(E_j)}
 &\le \sum_{m\in B}|f(m)|
 \left(\sum_{n\in E_j}|K_{T_{\ell,r}}(n,m)|^2\right)^{1/2}\\
 &\lesssim R^{d/2}\rho^N
 \bigl(1+R2^j\rho r_B\bigr)^{-N}
 \min\{1,(Rr)^{2M}\}\sum_{m\in B}|f(m)|\\
 &\le R^{d/2}\rho^N
 \bigl(1+R2^j\rho r_B\bigr)^{-N}
 \min\{1,(Rr)^{2M}\}|B|^{1/2}\|f\|_2.
\end{align*}
Since $r\le r_B$, $\|f\|_2\le A|B|^{1/2-1/p}$, and
$1+R2^j\rho r_B\ge R2^j\rho r_B$, it follows that
\begin{equation}\label{eq:offdiagonal}
 \|T_{\ell,r}f\|_{\ell^2(E_j)}
 \lesssim AR^{d/2}(R2^jr_B)^{-N}
 \min\{1,(Rr_B)^{2M}\}|B|^{1-1/p}.
\end{equation}

This, together with the H\"older inequality on $E_j$,
\eqref{eq:ball-volumes}, and \eqref{eq:offdiagonal}, implies
\begin{align}\label{eq:annulus}
 \textbf{H}_{2,j,\ell}
 &\le |E_j|^{1-p/2}\| T_{\ell,r}f\|_{\ell^2(E_j)}^p\notag\\
 &\lesssim A^p(2^j\rho r_B)^{d(1-p/2)}
 R^{dp/2}(R2^jr_B)^{-Np}|B|^{p-1}
 \min\{1,(Rr_B)^{2Mp}\}\notag\\
 &\lesssim A^p\rho^{d(1-p/2)}
 2^{-jp[N-d(1/p-1/2)]}
 (Rr_B)^{-p(N-d/2)}\min\{1,(Rr_B)^{2Mp}\},
\end{align}
where in the last step we used $|B|\simeq r_B^d$ and $r_B^{d(1-p/2)}r_B^{d(p-1)}=r_B^{dp/2}$.

Thus \eqref{eq:annulus} yields
\begin{align}\label{eq:off-sums}
\textbf{H}_2 &\quad\lesssim A^p\rho^{d(1-p/2)}
 \sum_{j\ge1}2^{-jp[N-d(1/p-1/2)]}
 \sum_{\ell\in\mathbb Z}(2^\ell r_B)^{-p(N-d/2)}
 \min\{1,(2^\ell r_B)^{2Mp}\}.
\end{align}
The first series is convergent because $N>d(1/p-1/2)$.
For completeness, split the second series according as
$2^\ell r_B\le1$ or $2^\ell r_B>1$. It is bounded by
\[
 \sum_{2^\ell r_B\le1}(2^\ell r_B)^{p[2M-N+d/2]}
 +\sum_{2^\ell r_B>1}(2^\ell r_B)^{-p(N-d/2)}\lesssim1.
\]
Both geometric series converge uniformly in $r_B$, by \eqref{eq:NM}.
Combining \eqref{eq:local-part} and \eqref{eq:off-sums} proves
\[
 \|S_\psi(\D^{i\gamma}h_{\nu,r}(\D)f)\|_p^p
 \lesssim A^p\rho^{d(1-p/2)}.
\]

This completes our proof.
\end{proof}

\subsection{Upper estimates}
\begin{proof}[Proof of Theorem~\ref{thm:main}: upper bound]
Fix $0<p\le1$ and let $\rho=1+|\gamma|$.
Choose integers $N,M$ with
\[
 N>d\left(\frac1p-\frac12\right),\qquad
 2M>N-\frac d2,
\]
and take $M$ large enough for the molecular characterization
\eqref{eq:mol-norm}. By Lemma~\ref{lem:small-balls}, we may work with
molecules associated with balls of radius at least $1$.

\smallskip\noindent\emph{Step 1. A single molecule.}
Let $a=\D^M b$ be a normalized $(p,M,\varepsilon)$-molecule
associated with $B=B(n_B,r_B)$, where $r_B\ge1$. It suffices to prove that 
\begin{equation}\label{eq:single-mol}
 \|\D^{i\gamma}a\|_{H^p_{\D}}
 \lesssim \rho^{d(1/p-1/2)}.
\end{equation}
Write $P(u)=1-(1-e^{-u})^M$. The identity
\begin{equation}\label{eq:identity}
 I=(I-e^{-r_B^2\D})^M+P(r_B^2\D)
\end{equation}
holds on $\ell^2(\Zd)$ by the spectral theorem. Since
$a=\D^M b$ and all the factors commute, we obtain
\begin{equation}\label{eq:two-terms}
 \D^{i\gamma}a
 =\D^{i\gamma}h_{0,r_B}(\D)a
 +\D^{i\gamma}h_{1,r_B}(\D)(r_B^{-2M}b),
\end{equation}
where $h_{0,r},h_{1,r}$ are defined in \eqref{eq:h-def}.

For $k\ge0$, set
\[
 a_k=a\,1_{S_k(B)},\quad
 b_k=b\,1_{S_k(B)},\quad B_k=2^{k+1}B.
\]
These functions are supported in $B_k$. From \eqref{eq:mol} with
$k=M$ and $k=0$, respectively, we have
\begin{align*}
 \|a_k\|_2&\le2^{-k\varepsilon}|2^kB|^{1/2-1/p},\\
 r_B^{-2M}\|b_k\|_2&\le2^{-k\varepsilon}|2^kB|^{1/2-1/p}.
\end{align*}
Since $|2^kB|\simeq |B_k|$ and $B_k$ has radius
$R_k=2^{k+1}r_B\ge r_B\ge1$, these inequalities imply
\begin{equation}\label{eq:akbk}
 \|a_k\|_2+r_B^{-2M}\|b_k\|_2
 \lesssim 2^{-k\varepsilon}|B_k|^{1/2-1/p}.
\end{equation}

We apply Lemma~\ref{lem:localized}, first to $a_k$ with
$\nu=0$ and then to $r_B^{-2M}b_k$ with $\nu=1$.
In both applications the  ball is $B_k$, the auxiliary radius
is $r=r_B\le R_k$, and the normalization factor is
$A\lesssim2^{-k\varepsilon}$. Hence
\begin{align*}
 &\|S_\psi(\D^{i\gamma}h_{0,r_B}(\D)a_k)\|_p
 +\|S_\psi(\D^{i\gamma}h_{1,r_B}(\D)r_B^{-2M}b_k)\|_p \lesssim 2^{-k\varepsilon}\rho^{d(1/p-1/2)}.
\end{align*}

This, together with  \eqref{eq:two-terms}, implies
\[
\begin{aligned}
	\|S_\psi(\D^{i\gamma}a)\|_p^p&\lesssim \|S_\psi(\D^{i\gamma}h_{0,r_B}(\D)a)\|_p^p
+\|S_\psi(\D^{i\gamma}h_{1,r_B}(\D)r_B^{-2M}b)\|_p^p\\
&\lesssim \sum_{k\ge 0}\|S_\psi(\D^{i\gamma}h_{0,r_B}(\D)a_k)\|_p^p
+\sum_{k\ge 0}\|S_\psi(\D^{i\gamma}h_{1,r_B}(\D)r_B^{-2M}b_k)\|_p^p\\
	&\lesssim \rho^{d(1-p/2)}\sum_{k\ge0}2^{-kp\varepsilon}\\
	&\lesssim\rho^{d(1-p/2)}.
\end{aligned}
\]
This implies  \eqref{eq:single-mol} and hence completes our proof.

\end{proof}

We first prove the upper bound estimates in Theorem~\ref{thm:lp}. The lower bounds and sharpness will be given in the next subsection.
\begin{proof}[Proof of Theorem~\ref{thm:lp}: upper bound]
For $p=2$, the spectral theorem gives
\[\|\D^{i\gamma}\|_{2\to2}=1.\]

For $1<p<2$, choose $p_0\in(0,1]$ with $p_0<p$.
By Theorem~\ref{thm:main}, $\D^{i\gamma}$ is bounded on
$H^{p_0}_{\D}$ with norm at most $C\rho^{d(1/p_0-1/2)}$.
Interpolating with $\ell^2=H^2_{\D}$ and using
\cite[Section~3]{BD}, we get
\[
 \norm{\D^{i\gamma}}_{\ell^p\to\ell^p}
 \lesssim\rho^{d(1/p-1/2)}.
\]
For $p>2$ the result follows by duality, since
$(\D^{i\gamma})^*=\D^{-i\gamma}$ on $\ell^2$.
\end{proof}

\subsection{Lower bounds and sharpness}
We next show that the exponents in Theorems~\ref{thm:main} and \ref{thm:lp} are optimal. We use only a frequency-localized stationary-phase argument.
\begin{lemma}\label{lem:stationary}
Let $\lambda$ be the function given in \eqref{eq:lambda}. There exist $\chi\in C_c^\infty((-\pi,\pi)^d)$, an open set
$U\subset\R^d$, and constants $c,\gamma_0>0$ such that the kernel
\[
 K_{\gamma,\chi}(n)=(2\pi)^{-d}\int_{[-\pi,\pi]^d}
 e^{i(n\cdot\theta+\gamma\log\lambda(\theta))}\chi(\theta)\,d\theta
\]
satisfies
\begin{equation}\label{eq:statbound}
 |K_{\gamma,\chi}(n)|\ge c|\gamma|^{-d/2}
\end{equation}
whenever $|\gamma|\ge\gamma_0$ and $n/\gamma\in U$. The set of such lattice points has cardinality at least $c|\gamma|^d$.
\end{lemma}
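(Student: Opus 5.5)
We prove the lemma by the method of stationary phase applied to the phase
\[
\Phi_{x}(\theta)=x\cdot\theta+\log\lambda(\theta),\qquad x=n/\gamma,
\]
so that $K_{\gamma,\chi}(n)=(2\pi)^{-d}\int e^{i\gamma\Phi_x(\theta)}\chi(\theta)\,d\theta$ is an oscillatory integral with large parameter $\gamma$. We treat $\gamma>0$; the case $\gamma<0$ follows by complex conjugation, replacing $U$ by $-U$. The plan is to pick a small neighbourhood $V$ of a point $\theta_0\in(0,\pi)^d$ away from the zero of $\lambda$, on which $g=\log\lambda$ is smooth and its Hessian is nondegenerate. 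We then show that $\nabla g$ is a diffeomorphism from $V$ onto an open set, and set $U=-\nabla g(V')$ for a smaller neighbourhood $V'\Subset V$.

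\textbf{Choice of $\theta_0$ and nondegeneracy.} Put $s_j=\sin^2(\theta_j/2)$. Then $\partial_j\lambda=2\sin\theta_j$ and $\partial_j^2\lambda=2\cos\theta_j$, and the Hessian of $g$ is
\[
\nabla^2 g=\frac{\mathrm{diag}(2\cos\theta_j)}{\lambda}-\frac{\nabla\lambda\,\nabla\lambda^{T}}{\lambda^2}.
\]
We choose $\theta_0=(\theta^*,\dots,\theta^*)$ with $\theta^*$ slightly less than $\pi/2$, or more simply near $\pi/2$. At $\theta^*=\pi/2$ the diagonal part vanishes and the Hessian is a negative rank-one matrix, which is degenerate when $d\ge2$. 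So instead we take $\theta_0$ with all $\theta_j$ in $(\pi/2,\pi)$. Then $\cos\theta_j<0$, the diagonal part is negative definite, and subtracting a positive semidefinite rank-one term keeps the Hessian negative definite. Hence $\det\nabla^2 g(\theta_0)\ne0$, and by continuity this holds on a neighbourhood $V\Subset(0,\pi)^d$ where also $\lambda\ge c>0$. Shrinking $V$ if necessary, we may assume $\nabla^2 g$ is uniformly negative definite on $V$, so $\nabla g$ is injective on $V$ (by the monotonicity $\langle\nabla g(a)-\nabla g(b),a-b\rangle<0$) and a diffeomorphism onto its image.

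\textbf{Stationary phase with uniformity.} Take $\chi\in C_c^\infty(V)$ with $\chi=1$ on $V'$, and let $U=-\nabla g(V')$, which is open. For $x\in \overline{U}$, the phase $\Phi_x$ has exactly one critical point $\theta_x=(\nabla g)^{-1}(-x)$ in $\operatorname{supp}\chi$, and it lies in $\overline{V'}$. The standard stationary phase expansion, with constants uniform over the compact parameter set $x\in\overline U$ (the phase depends smoothly on $x$, $|\nabla\Phi_x|$ is bounded below away from $\theta_x$, and the Hessian is uniformly nondegenerate), gives
\[
K_{\gamma,\chi}(n)=(2\pi)^{-d}e^{i\gamma\Phi_x(\theta_x)}\Bigl(\frac{2\pi}{\gamma}\Bigr)^{d/2}\frac{e^{-i\pi d/4}\chi(\theta_x)}{|\det\nabla^2 g(\theta_x)|^{1/2}}+O(\gamma^{-d/2-1}).
\]
Since $\chi(\theta_x)=1$ and $|\det\nabla^2g|$ is bounded above on $V$, the main term has modulus $\ge 2c\gamma^{-d/2}$. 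Choosing $\gamma_0$ large so that the error is at most $c\gamma^{-d/2}$ yields \eqref{eq:statbound}. This uniformity in $x$ is the main technical point. I would handle it by citing the version of stationary phase with smooth parameter dependence (e.g.\ H\"ormander, Theorem 7.7.5), applied on the compact set $x\in\overline U$.

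\textbf{Counting lattice points.} The set $\{n\in\Zd: n/\gamma\in U\}=\Zd\cap\gamma U$ has cardinality $\ge c|\gamma|^d$ for $|\gamma|\ge\gamma_0$. To see this, note that $U$ contains a cube $Q$ of side $\delta$, and $\gamma Q$ contains at least $(\gamma\delta-1)^d\gtrsim\gamma^d$ lattice points once $\gamma\delta\ge2$; enlarge $\gamma_0$ if needed.
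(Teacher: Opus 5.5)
Your proof is correct and follows the same route as the paper. Both arguments apply uniform stationary phase at a nondegenerate critical point of $\log\lambda$ away from $\theta=0$, over a small open set of $v=n/\gamma$, and then count lattice points. The only difference is the base point: the paper expands at $\theta_0=(\pi/2,0,\ldots,0)$, where $D^2\log\lambda=\operatorname{diag}(-1,1,\ldots,1)$ is indefinite, and gets a local diffeomorphism from the inverse function theorem, whereas your point in $(\pi/2,\pi)^d$ gives a negative definite Hessian and injectivity by monotonicity on a convex $V$. One small correction: for $\gamma<0$ you do not need to replace $U$ by $-U$, since $|K_{\gamma,\chi}(n)|=|K_{-\gamma,\chi}(-n)|$ for real $\chi$ and $(-n)/(-\gamma)=n/\gamma$.
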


\begin{proof}
	Recall that
	\[
	\lambda(\theta)
	=2\sum_{j=1}^d(1-\cos\theta_j),
	\qquad \theta\in[-\pi,\pi]^d.
	\]
	We choose
	\[
	\theta_0=(\pi/2,0,\ldots,0).
	\]
	A direct calculation gives
	\[
	\nabla\log\lambda(\theta_0)=(1,0,\ldots,0)
	\]
	and
	\[
	D^2\log\lambda(\theta_0)
	=\operatorname{diag}(-1,1,\ldots,1).
	\]
	Hence,
	\[
	\det D^2\log\lambda(\theta_0)=-1\ne0.
	\]
	
	By the inverse function theorem, the map
	$\theta\mapsto\nabla\log\lambda(\theta)$
	is a diffeomorphism in a sufficiently small
	neighborhood $V$ of $\theta_0$.
	Choose $\chi\in C_c^\infty(V)$ such that
	$\chi\ge0$ and $\chi=1$ in a neighborhood
	of $\theta_0$.
	
	For $\gamma\ne0$, put $v=n/\gamma$ and write
	\[
	K_{\gamma,\chi}(n)
	=\frac1{(2\pi)^d}
	\int_{[-\pi,\pi]^d}
	e^{i\gamma\Phi_v(\theta)}
	\chi(\theta)\,d\theta,
	\]
	where
	\[
	\Phi_v(\theta)
	=v\cdot\theta+\log\lambda(\theta).
	\]
	Let $v_0=(-1,0,\ldots,0)$. Then
	\[
	\nabla\Phi_{v_0}(\theta_0)=0
	\]
	and
	\[
	\det D^2\Phi_{v_0}(\theta_0)=-1\ne0.
	\]
	
	By choosing a sufficiently small open
	neighborhood $U$ of $v_0$, we see that,
	for every $v\in U$, the phase $\Phi_v$
	has a unique critical point $\theta(v)$
	in $V$. Moreover, we may assume that
	\[
	\chi(\theta(v))=1
	\quad\text{and}\quad
	|\det D^2\Phi_v(\theta(v))|\simeq1
	\]
	uniformly for $v\in U$.
	
	Applying the stationary phase theorem,
	we obtain
	\[
	K_{\gamma,\chi}(n)
	=
	|\gamma|^{-d/2}
	e^{i\gamma\Phi_v(\theta(v))}
	\left(A_\gamma(v)+O(|\gamma|^{-1})\right),
	\]
	where the error is uniform for $v\in U$,
	and
	\[
	A_\gamma(v)
	=(2\pi)^{-d/2}
	e^{i\pi\operatorname{sgn}(\gamma)(d-2)/4}
	\frac{\chi(\theta(v))}
	{|\det D^2\Phi_v(\theta(v))|^{1/2}}.
	\]
	In particular, $|A_\gamma(v)|\ge c>0$
	uniformly for $v\in U$.
	
	It follows that, for all sufficiently
	large $|\gamma|$,
	\[
	|K_{\gamma,\chi}(n)|
	\ge c|\gamma|^{-d/2},
	\]
	whenever $n/\gamma\in U$.
	
	Finally, since $U$ is a nonempty open
	subset of $\mathbb R^d$, we have
	\[
	\#\{n\in\mathbb Z^d:n/\gamma\in U\}
	\ge c|\gamma|^d
	\]
	for all sufficiently large $|\gamma|$.
	This completes the proof.
\end{proof}

\begin{proof}[Proof of the lower bound in Theorem~\ref{thm:lp}]
Let $P_\chi$ be the smooth torus multiplier with symbol $\chi$, where $\chi$ is the function as in Lemma \ref{lem:stationary}. Its kernel $P_\chi\delta_0$ decays faster than any polynomial, so it belongs to $\ell^p(\Zd)$ for all $p>0$, and its norm is independent of $\gamma$. Since
\[
 \D^{i\gamma}P_\chi\delta_0=K_{\gamma,\chi},
\]
Lemma~\ref{lem:stationary} gives for $1<p<2$
\[
 \norm{\D^{i\gamma}P_\chi\delta_0}_p
 \gtrsim |\gamma|^{-d/2}(|\gamma|^d)^{1/p}
 =|\gamma|^{d(1/p-1/2)}.
\]
Dividing by $\norm{P_\chi\delta_0}_p$ proves the lower bound. Duality gives the case $p>2$, and $p=2$ follows from the spectral theorem. For bounded $\gamma$, use $T_\gamma T_{-\gamma}=I$ and the equality $\|T_\gamma\|_{p\to p}=\|T_{-\gamma}\|_{p\to p}$, where $T_\gamma=\D^{i\gamma}$. Thus $\|T_\gamma\|_{p\to p}\ge1$ for every $\gamma$.

This completes our proof.
\end{proof}

\begin{proof}[Proof of sharpness in Theorem~\ref{thm:main}]
Fix $0<p\le1$. Let $q\in(1,2)$ and choose $0<\theta<1$ such that
\[
 \frac1q=\frac{1-\theta}{p}+\frac\theta2.
\]
By Proposition~\ref{prop:interpolation},
$[H^p_{\D}(\Zd),\ell^2(\Zd)]_\theta=\ell^q(\Zd)$.
Since $\|\D^{i\gamma}\|_{\ell^2\to\ell^2}=1$, interpolation gives
\[
 \|\D^{i\gamma}\|_{\ell^q\to\ell^q}
 \lesssim
 \|\D^{i\gamma}\|_{H^p_{\D}\to H^p_{\D}}^{1-\theta}.
\]
On the other hand, the sharp lower estimate in
Theorem~\ref{thm:lp} yields
\[
 (1+|\gamma|)^{d(1/q-1/2)}
 \lesssim \|\D^{i\gamma}\|_{\ell^q\to\ell^q}.
\]
Since
$1/q-1/2=(1-\theta)(1/p-1/2)$, the two inequalities give
\[
 \|\D^{i\gamma}\|_{H^p_{\D}\to H^p_{\D}}
 \gtrsim(1+|\gamma|)^{d(1/p-1/2)}.
\]
This completes the proof.
\end{proof}

\subsection{The classical Calder\'on--Zygmund approach}\label{subsec:explicit}
We explain why the standard first-difference kernel estimates do not
recover the sharp power of $1+|\gamma|$. First we give an explicit
formula for the convolution kernel in dimension one.

\begin{proposition}\label{prop:gamma-kernel}
Let $d=1$ and $\gamma\in\R\setminus\{0\}$. For every $n\in\mathbb Z$,
\[
 K_\gamma(n)=(-1)^n\frac{\Gamma(1+2i\gamma)}
 {\Gamma(1+i\gamma+n)\Gamma(1+i\gamma-n)}.
\]
If $A_\gamma=(|\gamma|\tanh(\pi|\gamma|)/\pi)^{1/2}$, then
\[
 |K_\gamma(n)|=\frac{A_\gamma}{\sqrt{n^2+\gamma^2}},
\]
and, for $n\ge0$,
\[
 |K_\gamma(n+1)-K_\gamma(n)|
 =\frac{A_\gamma\sqrt{1+4\gamma^2}}
 {\sqrt{n^2+\gamma^2}\sqrt{(n+1)^2+\gamma^2}}.
\]
\end{proposition}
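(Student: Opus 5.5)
The plan is to compute $K_\gamma(n)$ in closed form as the Fourier coefficient of a complex power of $|1-e^{i\theta}|^2$. The two modulus identities then follow from elementary Gamma-function identities and a one-step recursion in $n$.

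\emph{Step 1 (closed form).} Since $\lambda(\theta)=4\sin^2(\theta/2)=|1-e^{i\theta}|^2$ for $d=1$, we have
\[
K_\gamma(n)=\frac1{2\pi}\int_{-\pi}^{\pi}e^{in\theta}\,|1-e^{i\theta}|^{2s}\,d\theta\quad\text{at } s=i\gamma .
\]
I will prove the classical formula
\[
\frac1{2\pi}\int_{-\pi}^{\pi}e^{in\theta}|1-e^{i\theta}|^{2s}\,d\theta=(-1)^n\frac{\Gamma(1+2s)}{\Gamma(1+s+n)\Gamma(1+s-n)} .
\]
Both sides are analytic in $s$ on $\operatorname{Re}s>-1/2$: the left side because $|1-e^{i\theta}|^{2s}$ is integrable there, the right side because $1/\Gamma$ is entire. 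So it suffices to prove the formula for real $s>0$ (or $s\in\mathbb N$) and continue analytically.

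For $s>0$, write $|1-e^{i\theta}|^{2s}=(1-e^{i\theta})^s(1-e^{-i\theta})^s$ with principal branches. Expand each factor by the binomial series, which converges absolutely on $|z|=1$ for $s>0$. The coefficient of $e^{-in\theta}$ is then a Gauss/Chu--Vandermonde sum,
\[
\sum_k\binom{s}{k}\binom{s}{k+n}(-1)^n={}_2F_1\text{-value at }1,
\]
which evaluates to $(-1)^n\binom{2s}{s+n}$. This equals the stated Gamma quotient. By symmetry of the kernel under $n\mapsto-n$, it does not matter whether one pairs with $e^{in\theta}$ or $e^{-in\theta}$. Checking the branch bookkeeping and the sign $(-1)^n$ is the main obstacle. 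A fallback is to verify the formula for $s\in\mathbb N$, where $|1-e^{i\theta}|^{2s}=(2-e^{i\theta}-e^{-i\theta})^s$ is a trigonometric polynomial. Carlson's theorem then extends it, given the growth bounds of both sides in $s$.

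\emph{Step 2 (modulus).} Use $\Gamma(1+z+n)=\Gamma(1+z)\prod_{k=1}^n(k+z)$ and $\Gamma(1+z-n)=\Gamma(1+z)/\prod_{k=0}^{n-1}(z-k)$ with $z=i\gamma$. The ratio of the two products has modulus $\sqrt{n^2+\gamma^2}/|\gamma|$, since all factors except $k=n$ and $k=0$ pair off in modulus. Combining this with the reflection-type identities
\[
|\Gamma(1+i\gamma)|^2=\frac{\pi\gamma}{\sinh\pi\gamma},\qquad |\Gamma(1+2i\gamma)|^2=\frac{2\pi\gamma}{\sinh 2\pi\gamma}
\]
gives
\[
|K_\gamma(n)|^2=\frac{|\gamma|\tanh(\pi|\gamma|)}{\pi}\cdot\frac{1}{n^2+\gamma^2},
\]
which is the claimed $A_\gamma/\sqrt{n^2+\gamma^2}$.

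\emph{Step 3 (first difference).} From $\Gamma(2+i\gamma+n)=(1+i\gamma+n)\Gamma(1+i\gamma+n)$ and $\Gamma(i\gamma-n)=\Gamma(1+i\gamma-n)/(i\gamma-n)$ we get
\[
K_\gamma(n+1)=-\frac{i\gamma-n}{1+n+i\gamma}K_\gamma(n).
\]
Hence
\[
K_\gamma(n+1)-K_\gamma(n)=-\frac{1+2i\gamma}{1+n+i\gamma}K_\gamma(n).
\]
Taking moduli and inserting Step 2 yields the stated formula with the factor $\sqrt{1+4\gamma^2}/\sqrt{(n+1)^2+\gamma^2}$.
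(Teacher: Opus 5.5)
Your proof is correct. Steps~2 and~3 match the paper's argument in substance: the paper applies the reflection formula to $\Gamma(1+i\gamma-n)$ directly instead of reducing to $|\Gamma(1+i\gamma)|^2$ through finite products, and it uses the same ratio $K_\gamma(n+1)/K_\gamma(n)=(n-i\gamma)/(n+1+i\gamma)$.

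The real difference is Step~1. The paper never expands the symbol. It writes $K_z(n)=\frac{4^z}{\pi}\int_0^\pi\sin^{2z}u\cos(2nu)\,du$ and evaluates $K_z(0)$ as a Beta integral combined with Legendre's duplication formula. It then integrates $\frac{d}{du}\bigl[\sin^{2z+1}u\cos((2n-1)u)\bigr]$ over $[0,\pi]$ to get $(n+z)I_n+(z-n+1)I_{n-1}=0$, and iterates $K_z(n)=\frac{n-1-z}{n+z}K_z(n-1)$.

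You instead factor $|1-e^{i\theta}|^{2s}$ and multiply two absolutely convergent binomial series. This gives every Fourier coefficient at once as $(-1)^n\binom{2s}{s+n}$, via Gauss's summation of ${}_2F_1(-s,n-s;n+1;1)$. That summation is legitimate because $\operatorname{Re}(c-a-b)=1+2s>0$. Your route identifies the kernel conceptually as a generalized central binomial coefficient, the continuation of the trigonometric-polynomial case $s\in\mathbb N$, and it avoids the recursion. The cost is importing Gauss's theorem. The paper's route is more elementary, and its recursion is exactly the ratio reused later for the first-difference formula.

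The branch issue you flag as the main obstacle is harmless. For $0<|\theta|<\pi$ we have $\operatorname{Re}(1-e^{\pm i\theta})=1-\cos\theta>0$. Off the negative axis the principal logarithm satisfies $\log\bar w=\overline{\log w}$, so $w^s\bar w^s=|w|^{2s}$ for every complex $s$. The sign $(-1)^n$ is just $(-1)^{j+k}$ with $k=j+n$. So the Carlson fallback is unnecessary.
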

\begin{proof}
We first recall the kernel of $\Delta_1^{i\gamma}$.
For $f\in\ell^1(\mathbb Z)$, let
\[
 \widehat f(\theta)=\sum_{n\in\mathbb Z}f(n)e^{in\theta},
 \qquad \theta\in[-\pi,\pi].
\]
Since
\[
 \widehat{\Delta_1f}(\theta)
 =(2-2\cos\theta)\widehat f(\theta),
\]
the spectral theorem gives
\[
 \widehat{\Delta_1^{i\gamma}f}(\theta)
 =(2-2\cos\theta)^{i\gamma}\widehat f(\theta).
\]
Here the value of the multiplier at $\theta=0$ is immaterial.
Therefore, $\Delta_1^{i\gamma}$ is a convolution operator whose
kernel is
\[
 K_\gamma(n)=\frac1{2\pi}\int_{-\pi}^{\pi}
 (2-2\cos\theta)^{i\gamma}e^{-in\theta}\,d\theta,
 \qquad n\in\mathbb Z.
\]

For $\operatorname{Re}z>0$, we set
\[
 K_z(n)=\frac1{2\pi}\int_{-\pi}^{\pi}
 (2-2\cos\theta)^z e^{-in\theta}\,d\theta.
\]
Using $2-2\cos\theta=4\sin^2(\theta/2)$ and the change
of variable $\theta=2u$, we obtain
\[
 K_z(n)=\frac{4^z}{\pi}\int_0^\pi
 \sin^{2z}u\cos(2nu)\,du.
\]
In particular, by symmetry and the change of variable
$t=\sin^2u$ on $[0,\pi/2]$, we have
\begin{align*}
 K_z(0)
 &=\frac{4^z}{\pi}\int_0^\pi\sin^{2z}u\,du\\
 &=\frac{4^z}{\pi}\int_0^1
 t^{z-1/2}(1-t)^{-1/2}\,dt\\
 &=\frac{4^z}{\pi}B\left(z+\frac12,\frac12\right)\\
 &=\frac{4^z\Gamma(z+1/2)\Gamma(1/2)}
 {\pi\Gamma(z+1)}.
\end{align*}
Recall that $\Gamma(1/2)=\sqrt\pi$ and the duplication
formula gives
\[
 \Gamma(2z+1)=\frac{2^{2z}}{\sqrt\pi}
 \Gamma\left(z+\frac12\right)\Gamma(z+1).
\]
Consequently,
\[
 K_z(0)=\frac{\Gamma(1+2z)}{\Gamma(1+z)^2}.
\]

For $n\ge1$, let
$I_n=\int_0^\pi\sin^{2z}u\cos(2nu)\,du$.
Integrating the derivative of
$\sin^{2z+1}u\cos((2n-1)u)$ over $[0,\pi]$ gives
\[
 (n+z)I_n+(z-n+1)I_{n-1}=0.
\]
Thus
\[
 K_z(n)=\frac{n-1-z}{n+z}K_z(n-1).
\]
By iteration, and since $K_z(-n)=K_z(n)$, we obtain
\[
 K_z(n)=(-1)^n\frac{\Gamma(1+2z)}
 {\Gamma(1+z+n)\Gamma(1+z-n)},\qquad n\in\mathbb Z.
\]
The integral defining $K_z(n)$ is analytic for
$\operatorname{Re}z>-1/2$. Hence the identity also holds for
$z=i\gamma$, by analytic continuation. This gives the stated
formula for the kernel of $\Delta_1^{i\gamma}$.

Using the reflection formula, for $n\ge0$ we obtain
\[
 |K_\gamma(n)|
 =\frac{|\Gamma(1+2i\gamma)|\sinh(\pi|\gamma|)}{\pi}
   \frac{|\Gamma(n-i\gamma)|}{|\Gamma(n+1+i\gamma)|}
 =\frac{|\Gamma(1+2i\gamma)|\sinh(\pi|\gamma|)}
 {\pi\sqrt{n^2+\gamma^2}}.
\]
Since $|\Gamma(1+2i\gamma)|^2=2\pi|\gamma|/\sinh(2\pi|\gamma|)$,
this proves the formula for $|K_\gamma(n)|$.
Finally,
\[
 \frac{K_\gamma(n+1)}{K_\gamma(n)}
 =\frac{n-i\gamma}{n+1+i\gamma},
 \qquad
 K_\gamma(n+1)-K_\gamma(n)
 =-\frac{1+2i\gamma}{n+1+i\gamma}K_\gamma(n).
\]
Taking absolute values gives the stated difference formula.
\end{proof}

By Proposition~\ref{prop:gamma-kernel}, for $|\gamma|\ge1$,
the size coefficient has order $(1+|\gamma|)^{1/2}$.
However, the first-difference bound carries a factor of order
$(1+|\gamma|)^{3/2}$. Indeed, for
$n\ge2(1+|\gamma|)$ we have
\[
 |K_\gamma(n+1)-K_\gamma(n)|
 \simeq (1+|\gamma|)^{3/2}n^{-2}.
\]
Consequently, the usual first-difference Calder\'on--Zygmund
estimate does not give the sharp Hardy-space bound.

The same phenomenon occurs in higher dimensions. The low-frequency expansion $\lambda(\theta)=|\theta|^2+O(|\theta|^4)$ implies, for each fixed $\gamma\ne0$ and large $|n|$, that
\begin{equation}\label{eq:asymp}
	K_\gamma(n)=c_{d,\gamma}|n|^{-d-2i\gamma}+O_\gamma(|n|^{-d-2}),\quad
	c_{d,\gamma}=\frac{4^{i\gamma}\Gamma(d/2+i\gamma)}{\pi^{d/2}\Gamma(-i\gamma)}.
\end{equation}
Consequently, the leading term in a first difference has coefficient $(d+2i\gamma)c_{d,\gamma}$, of magnitude comparable to $|\gamma|^{d/2+1}$ for large $|\gamma|$. Hence the standard first-difference Calder\'on--Zygmund regularity estimate cannot have a constant of order $(1+|\gamma|)^{d/2}$ for all $\gamma$; compare \cite{Stein}. A first-difference Calder\'on--Zygmund argument therefore loses a factor
$1+|\gamma|$ at the endpoint. Interpolating the resulting bound
with $\ell^2$ gives the larger exponent $(d+2)|1/p-1/2|$.
This does not rule out more refined kernel methods, but explains why
ordinary first-difference estimates do not suffice. This explains the advantage of the localized weighted $\ell^2$ kernel argument used above.

\section*{Acknowledgements}
This work was supported by the Australian Research Council under grant DP260101083.

\end{document}